\documentclass[12pt]{amsart}

\usepackage{amsmath,amssymb,amscd,amsthm,amsxtra}
\usepackage{latexsym}
\usepackage{color}
\headheight=8pt
\topmargin=0pt
\textheight=624pt
\textwidth=432pt
\oddsidemargin=18pt
\evensidemargin=18pt

\allowdisplaybreaks[4]

\newtheorem{theorem}{Theorem}
\newtheorem{theoremA}{Theorem}

\newtheorem{lemma}{Lemma}

\newtheorem{remark}{Remark}

\newtheorem{corollary}{Corollary}

\newcommand{\q}{\quad}
\newcommand{\qq}{\quad\quad}



\newcommand{\nf}{\infty}

\newcommand{\al}{\alpha}
\newcommand{\be}{\beta}
\newcommand{\ga}{\gamma}
\newcommand{\Ga}{\Gamma}
\newcommand{\de}{\delta}

\newcommand{\De}{\Delta}

\newcommand{\la}{\lambda}

\newcommand{\si}{\sigma}

\newcommand{\vp}{\Psi}

\newcommand{\rn}{{\mathbf R}^n}


%

\newcommand{\zn}{\mathbf Z^n}

\newcommand{\zzz}{\mathbf Z}

\newcommand{\ccc}{\mathbf C}


\newcommand{\lprn}{L^{p}(\rn)}


%


%



\newcommand{\cs}{\mathcal S}

\newcommand{\lab}{\label}

\newcommand{\intrn}{\int_{\rn}}

\newcommand{\f}{\frac}

\newcommand{\p}{\partial}

\newcommand{\abs}[1]{\left|#1\right|}
\newcommand{\n}[2]{{\left\| #1 \right\|}_{#2}}
\newcommand{\lan}[1]{\left\langle #1\right\rangle}
\newcommand{\wh}[1]{\widehat{#1}}
\newcommand{\wt}[1]{\widetilde{#1}}
\newcommand{\na}{\nabla}
\newcommand{\wtv}{\wt{\vp}}

\newcommand{\br}{{\mathbf R}}
 
\newcommand{\bz}{\mathbf Z}
\newcommand{\bn}{\mathbf N}

\newcommand{\cf}{\mathcal F}
\newcommand{\RE}{\textnormal{Re}\,}
\begin{document}

\title
[Kato-Ponce]
{The Kato-Ponce inequality}

\author{Loukas Grafakos, Seungly Oh}

\address{
Department of Mathematics\\
University of Missouri\\
Columbia, MO 65211, USA}

\email{grafakosl@missouri.edu}

\email{ohseun@missouri.edu}

\thanks{Grafakos' research partially
supported by the NSF under grant DMS 0900946}

\date{\today}

\subjclass{Primary 42B20. Secondary  46E35}

\keywords{Kato-Ponce}

\begin{abstract} 
In this article we revisit the inequalities of Kato and Ponce concerning the $L^r$ norm of the Bessel potential $J^s=(1-\De)^{s/2}$ (or Riesz potential $D^s = (-\De)^{s/2}$)
 of the  product of two functions in terms of the product of the $L^{p}$ norm  of one function and the $L^{q}$ norm of the the Bessel potential $J^s$ (resp. Riesz potential $D^s$)
 of the other function. Here the indices $p$, $q$, and $r$ are related as in H\"older's inequality $1/p+1/q=1/r$ and they satisfy $1\leq p,q \leq \infty$ and $1/2\leq r<\infty$. Also the estimate is weak-type in the  case when either $p$ or $q$ is equal to $1$. In the case when $r<1$ we indicate via an example that when $s\le n/r-n$ the inequality  fails.  Furthermore, we extend these results to the multi-parameter case.
\end{abstract}

\maketitle

\section{Introduction}
In \cite{KP}, Kato and Ponce obtained the commutator estimate
\[
\n{J^{s} (fg)- f(J^s g)}{L^p(\rn)} \leq C\big[\n{\na f}{L^{\infty}(\rn)} \n{J^{s-1} g}{L^p(\rn)} + \n{J^s f}{L^p(\rn)} \n{g}{L^{\infty}(\rn)}\big]
\]
for $1<p<\infty$ and $s>0$, where $J^s := (1-\De)^{s/2}$ is the Bessel potential,  $\na$ is the $n$-dimensional gradient, $f$, $g$ are Schwartz functions, and $C$ is a constant depending on $n$, $p$ and $s$.  This estimate was obtained by applying the Coifman-Meyer multiplier theorem~\cite{CM} and Stein's complex interpolation theorem for analytic families~\cite{St}. {Using a homogeneous symbol $D^s := (-\De)^{s/2}$ instead, Kenig, Ponce, Vega \cite{KPV} obtained the following estimate,
\[
\n{D^s [fg] - f D^s f - g D^s f}{L^r} \leq C(s,s_1, s_2, r,p,q) \n{D^{s_1} f}{L^p} \n{D^{s_2} g}{L^q}
\]
where $s= s_1 + s_2$ for $s, s_1, s_2 \in (0,1)$, and $1<p,q,r <\infty$ such that $\f{1}{r} = \f{1}{p} +\f{1}{r}$. } 

In place of the original statement given by Kato and Ponce, the following variant is  known in the literature as the Kato-Ponce inequality (also \emph{fractional Leibniz rule}) 
\[
\n{J^{s} (fg)}{L^r(\rn)} \leq C \big[\n{f}{L^{p_1}(\rn)} \n{J^{s} g}{L^{q_1}(\rn)} + \n{J^s f}{L^{p_2}(\rn)} \n{g}{L^{q_2}(\rn)}\big]
\]
where $s>0$ and $\f{1}{r} = \f{1}{p_1} + \f{1}{q_1} = \f{1}{p_2} + \f{1}{q_2}$ for $1< r<\infty$, $1< p_1, p_2, q_1, q_2\leq \infty$ and $C=C(s,n,r,p_1,p_2,q_1,q_2)$.  This can be proved in a similar manner \cite{G2, GK}, using the Coifman-Meyer multiplier theorem in conjunction with Stein's complex interpolation.  The homogeneous version of above estimate, where $J^s$ is replaced by $D^s$, can also be proved by  the same approach.   Alternatively, other known proofs involve applications of the Hardy-Littlewood vector-valued  maximal function inequality~\cite{CW}, Mihlin theorem~\cite{MS} or vector-valued Calderon-Zygmund theorem~\cite{BB}, but these methods naturally do not extend to $r<1$.  The approach in this work does not only provide a new proof for the well-studied case $r \geq 1$, but it  also has a natural extension to $r<1$.

There are further generalizations of Kato-Ponce-type inequalities.  For instance, Muscalu, Pipher, Tao, and Thiele, \cite{Acta} extended this inequality to allow for partial fractional derivatives in $\mathbf{R}^2$.  Bernicot, Maldonado, Moen, and Naibo  \cite{BMMN} proved the Kato-Ponce inequality in weighted Lebesgue spaces under certain restrictions on the weights.  The last authors also extended the Kato-Ponce inequality to indices $r<1$ under the assumption $s>n$.

In this article, we prove Kato-Ponce inequality for $1/2\leq r <\infty$,  where Lebesgue spaces~$L^r$ are replaced by weak type Lorentz spaces~$L^{r,\infty}$ when either $p$ or $q$ on the right hand side equals $1$.  This agrees with the range  of indices given by Coifman-Meyer multiplier theorem (Theorem~\ref{th:CM} below), except for the missing endpoint bound $L^1\times L^{\infty} \to L^{1,\infty}$.   When $r>1$ the inequality is valid for all $s\ge 0$ but when $r< 1$ there is a restriction $s>n/r-n$. Moreover, we show via an example that the inequality fails for   $s\le n/r-n$   indicating the sharpness of the restriction. Our inequality  extends  that      of Bernicot, Maldonado, Moen and Naibo   in \cite{BMMN} which requires $s>n$ for $r<1$.    Additionally, we   generalize  the multi-parameter extension of the Kato-Ponce inequality  by Muscalu, Pipher, Tao, and Thiele   \cite{Acta} to allow for partial fractional derivatives in $\br^n$.

We now state two of our main results in this work.  Additional results are obtained in Section~\ref{sec:multiKP}.

\begin{theorem}\label{th:KP1}
Let $\f{1}{2}<r<\infty$, $1<p_1, p_2, q_1, q_2 \leq \infty$ satisfy $\f{1}{r} = \f{1}{p_1}+ \f{1}{q_1} = \f{1}{p_2} + \f{1}{q_2}$. Given  $s>\max \left(0, f{n}{r}-n\right)$ or $s\in 2\bn$, there exists a constant $C= C(n,s,r,p_1,q_1,p_2,q_2)<\infty$ such that for for all $ f,g \in \cs(\rn)$ we have
\begin{align}
\n{D^s (fg)}{L^r(\rn)} \leq C \left[\n{D^s f}{L^{p_1}(\rn)} \n{g}{L^{q_1}(\rn)} + \n{f}{L^{p_2}(\rn)} \n{D^s g}{L^{q_2}(\rn)}\right], \label{eq:kp}\\
\n{J^{s} (fg)}{L^r(\rn)} \leq C(s,n) \left[\n{ f}{L^{p_1}(\rn)} \n{J^{s} g}{L^{q_1}(\rn)} + \n{J^s f}{L^{p_2}(\rn)} \n{g}{L^{q_2}(\rn)}\right] \label{eq:KatoPonce}
\end{align}

Moreover for $r<1$, if one of the indices $p_1$, $p_2$, $q_1$, $q_2$ is equal to $1$, then \eqref{eq:kp} and \eqref{eq:KatoPonce} hold when  the $L^r(\rn)$ norms on the left hand side of  the inequalities are replaced by the $L^{r,\infty}(\rn)$ quasi-norm. \end{theorem}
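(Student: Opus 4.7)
The plan is to prove \eqref{eq:kp} first, then deduce \eqref{eq:KatoPonce} by splitting $J^s = D^s\phi(D)+\psi(D)$ with $\psi$ a smooth symbol of compact support, handling the low-frequency remainder directly via Hölder and Young. For \eqref{eq:kp}, I fix a Littlewood-Paley partition $\mathrm{Id}=\sum_j\Delta_j$ on $\rn\setminus\{0\}$ with $\Delta_j$ localizing to $\{|\xi|\sim 2^j\}$, set $S_j=\sum_{k\le j-3}\Delta_k$, and split the product into three paraproducts:
\[
fg \;=\; \Pi_1(f,g) + \Pi_2(f,g) + \Pi_3(f,g),
\]
with $\Pi_1(f,g)=\sum_j S_j f\cdot \Delta_j g$, $\Pi_2(f,g)=\sum_j \Delta_j f\cdot S_j g$, and the resonant term $\Pi_3(f,g)=\sum_{|j-k|\le 2}\Delta_j f\cdot \Delta_k g$.

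For the off-diagonal pieces, each summand of $\Pi_1$ is Fourier supported in $\{|\xi+\eta|\sim 2^j\}$ with $|\xi|\ll|\eta|$, so
\[
D^s \Pi_1(f,g) \;=\; T_{\sigma_1}(f, D^s g), \qquad \sigma_1(\xi,\eta) \;=\; \frac{|\xi+\eta|^s}{|\eta|^s}\,\Phi(\xi,\eta),
\]
where $\Phi$ smoothly cuts off to $|\xi|\ll|\eta|$; on this region $\sigma_1$ satisfies the Coifman-Meyer differential inequalities. The Coifman-Meyer multiplier theorem (Theorem~\ref{th:CM}), whose extension to the range $1/2<r<\infty$ by Grafakos and Torres provides both strong- and weak-type $L^p\times L^q\to L^r$ bounds (the weak-type occurring when $p$ or $q$ equals $1$), then yields the required estimate. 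The symmetric piece $\Pi_2$ follows analogously.

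The main obstacle is the resonant paraproduct $\Pi_3$, for which the Coifman-Meyer framework breaks down since $|\xi+\eta|$ need not be comparable to $\max(|\xi|,|\eta|)$. Writing $\Pi_3(f,g)=\sum_j \Delta_j f\cdot\widetilde\Delta_j g$, each summand is Fourier supported only in the ball $\{|\xi|\le 2^{j+3}\}$. Inserting an auxiliary Littlewood-Paley family $\{\Delta'_m\}$ and reversing the order of summation yields
\[
D^s \Pi_3(f,g) \;=\; \sum_m \Delta'_m F_m, \qquad F_m \;=\; \sum_{j\ge m-C} D^s\bigl(\Delta_j f\cdot\widetilde\Delta_j g\bigr),
\]
with $\Delta'_m F_m$ Fourier localized to $\{|\xi|\sim 2^m\}$. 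For $r\ge 1$ a vector-valued maximal function and Littlewood-Paley square-function argument closes the estimate under the mild condition $s>0$. For $r<1$ I invoke the almost-orthogonality inequality $\|\sum_m \Delta'_m F_m\|_{L^r}^r \lesssim \sum_m \|\Delta'_m F_m\|_{L^r}^r$, valid because the pieces are nearly frequency-disjoint, and control each term by first pushing $D^s$ through the Littlewood-Paley projection to produce a factor $2^{ms}$, then applying Hölder together with a Bernstein/Nikol'skii-type reduction that costs $2^{mn(1/r-1)}$ when the Fourier support lies in a ball of radius $2^m$. Summing the resulting geometric series in $m\le j+C$ is convergent precisely when $s>n/r-n$, giving the sharp restriction.

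Finally, the even-integer case $s\in 2\bn$ is handled separately via the classical Leibniz formula
\[
(-\Delta)^{s/2}(fg) \;=\; \sum_{|\alpha|+|\beta|=s} c_{\alpha\beta}\,\partial^\alpha f\,\partial^\beta g,
\]
which reduces the inequality to iterated Hölder and requires no smoothness restriction on $s$. The weak-type endpoint cases in which some $p_i$ or $q_i$ equals $1$ are treated by repeating the above argument in the quasi-Banach space $L^{r,\infty}$, employing the $L^1\to L^{1,\infty}$ endpoint of the Coifman-Meyer theorem for $\Pi_1$ and $\Pi_2$ and a Kolmogorov-type quasi-norm estimate for the resonant sum in $\Pi_3$.
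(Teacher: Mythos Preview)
Your paraproduct decomposition and the Coifman--Meyer treatment of $\Pi_1$, $\Pi_2$ match the paper, and your square-function argument for $\Pi_3$ when $r\ge 1$ is essentially the paper's Case~2. The genuine gap is your treatment of $\Pi_3$ when $r<1$. The step you call a ``Bernstein/Nikol'skii-type reduction that costs $2^{mn(1/r-1)}$'' does not exist in the direction you need: for $r<1$ there is \emph{no} estimate $\|h\|_{L^r}\lesssim R^{n(1/r-1)}\|h\|_{L^1}$ for band-limited $h$ (a sum of $N$ far-separated translates of a fixed band-limited bump has $\|h\|_{L^r}\sim N^{1/r}$ but $\|h\|_{L^1}\sim N$). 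After the $r$-triangle inequality you must control $\|\widetilde\Delta'_m(\Delta_j f\cdot\widetilde\Delta_j g)\|_{L^r}$, and removing the outer convolution $\widetilde\Delta'_m$ is exactly what fails on $L^r$, $r<1$; the Peetre-maximal substitute costs $2^{(j-m)n/\rho}$ with $\rho<r$ and yields only the non-sharp condition $s>n/r$. Your reduction of the inhomogeneous case has the same defect: for $r<1$ neither the Mikhlin multiplier $\phi(D)$ nor the convolution $\psi(D)$ (``Young'') is bounded on $L^r$, so one cannot simply pull them through the norm.

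The paper's device for $\Pi_3$, which is the heart of the argument, is different. After rescaling one expands the compactly supported function $\Phi_s(\zeta)=|\zeta|^s\Phi(\zeta/4)$ in a Fourier series on a cube, so that
\[
\Pi_3[f,g]=2^{2s}\sum_{m\in\bz^n}c_m^s\sum_{k\in\bz}(\Delta_k^m f)(\widetilde\Delta_k^m D^s g),
\]
where $\Delta_k^m$ is the Littlewood--Paley operator with kernel translated by $2^{-k}m/16$. Two ingredients then close the estimate: (i) an asymptotic analysis of $D^s$ acting on a Schwartz function (Lemma~\ref{le:homKP}) gives the sharp coefficient decay $c_m^s=O((1+|m|)^{-n-s})$; (ii) a Calder\'on--Zygmund argument (Corollary~\ref{cor:m}) shows the square function built from the \emph{shifted} family $\{\Delta_k^m\}_k$ is bounded on $L^p$, $1<p<\infty$, with norm only $O(\log(2+|m|))$. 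Applying the $r_*$-triangle inequality in $m$ (with $r_*=\min(r,1)$), Cauchy--Schwarz in $k$, H\"older in $x$, and these two facts reduces matters to the convergence of $\sum_m(1+|m|)^{-(n+s)r_*}[\log(2+|m|)]^{2r_*}$, which holds exactly when $r_*(n+s)>n$, i.e.\ $s>n/r-n$. The inhomogeneous inequality is not deduced from the homogeneous one; it is reproved with the same Fourier-series scheme, splitting $\sum_k$ into $k\ge 0$ and $k<0$ and invoking a uniform-in-$\delta$ asymptotic for $(\delta^2-\Delta)^{s/2}$ on Schwartz functions (Lemma~\ref{le:inhom}) to control the Fourier coefficients.
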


 We remark that the statement above does not include the endpoint $L^1\times L^{\infty} \to L^{1,\infty}$.  Next, we have a companion theorem that focuses on  negative results which highlight  the sharpness of Theorem~\ref{th:KP1}.

\begin{theorem}\label{th:KP2}
Let $s\leq \max(\f{n}{r}-n,0)$ and $s\not\in 2\bn \cup \{0\}$.  Then both \eqref{eq:kp} and \eqref{eq:KatoPonce} fail  for any $1<p_1,q_1,p_2,q_2<\infty$. 
\end{theorem}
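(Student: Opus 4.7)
I split the proof into two regimes according to the sign of $s$, fixing a real, even bump $\phi\in\csrn$ with $\int\phi^2>0$ and $\hat\phi$ compactly supported in $\{|\xi|\leq 1\}$.

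\textbf{Case 1: $s<0$.} Set $f_N(x):=e^{iNx_1}\phi(x)$ and $g_N(x):=e^{-iNx_1}\phi(x)$, so $f_Ng_N=\phi^2$ is independent of $N$. Since $\widehat{D^sf_N}(\xi)=|\xi|^s\hat\phi(\xi-Ne_1)$ and $|\xi|\asymp N$ on the support of $\hat\phi(\cdot-Ne_1)$ for large $N$, we have $\|D^sf_N\|_{L^{p_1}}\asymp N^s\|\phi\|_{L^{p_1}}$ and similarly for the other three norms on the right of \eqref{eq:kp}; the same asymptotics hold for \eqref{eq:KatoPonce} since $(1+|\xi|^2)^{s/2}\asymp N^s$ on that support. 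The left-hand side equals $\|D^s(\phi^2)\|_{L^r}$ or $\|J^s(\phi^2)\|_{L^r}$, a positive quantity (possibly $+\infty$) independent of $N$, while the right-hand side is $O(N^s)\to 0$; both inequalities fail as $N\to\infty$.

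\textbf{Case 2: $0<s\leq n/r-n$ with $s\notin 2\bn$} (which forces $r<1$). For \eqref{eq:kp} I take $f=g=\phi$. The distributional identity $\cf^{-1}(|\xi|^s)=c_s|x|^{-n-s}$, which holds with a nonzero constant $c_s$ precisely because $s\notin 2\bn\cup\{0\}$, combined with a Taylor expansion of $\widehat{\phi^2}$ about the origin, yields
\[
D^s(\phi^2)(x)= c_s\Big(\int\phi^2\Big)|x|^{-n-s}+O(|x|^{-n-s-1}),\qquad |x|\to\infty.
\]
Hence $\|D^s(\phi^2)\|_{L^r}=+\infty$ whenever $(n+s)r\leq n$, whereas $\|D^s\phi\|_{L^{p_i}}$ and $\|D^s\phi\|_{L^{q_i}}$ are finite because $p_i,q_i>1$; this immediately refutes \eqref{eq:kp}. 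This slow-decay mechanism explains why the threshold $s=n/r-n$ is sharp and why even-integer exponents must be excluded, since there $c_s=0$ and the counterexample collapses.

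For \eqref{eq:KatoPonce} in the same range I argue by a scaling reduction. Writing $J^s_\mu:=\cf^{-1}(\mu^2+|\xi|^2)^{s/2}\cf$, so that $J^s_1=J^s$ and $J^s_0=D^s$, a direct dilation computation gives $\|J^s[h(\lambda\cdot)]\|_{L^p}=\lambda^{s-n/p}\|J^s_{1/\lambda}h\|_{L^p}$. Substituting $\phi(\lambda\cdot)$ into both slots of \eqref{eq:KatoPonce} and cancelling the common factor $\lambda^{s-n/r}$ (valid by $1/r=1/p_i+1/q_i$) reduces the hypothesis to
\[
\|J^s_{1/\lambda}(\phi^2)\|_{L^r}\leq C\big[\|\phi\|_{L^{p_1}}\|J^s_{1/\lambda}\phi\|_{L^{q_1}}+\|J^s_{1/\lambda}\phi\|_{L^{p_2}}\|\phi\|_{L^{q_2}}\big]
\]
for all $\lambda>0$. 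As $\lambda\to\infty$ the symbol $(\lambda^{-2}+|\xi|^2)^{s/2}$ converges pointwise to $|\xi|^s$ and is dominated by $(1+|\xi|^2)^{s/2}$; dominated convergence gives $\|J^s_{1/\lambda}\phi\|_{L^p}\to\|D^s\phi\|_{L^p}<\infty$ for each $p\in\{p_i,q_i\}$, whereas Fatou's lemma combined with $\|D^s(\phi^2)\|_{L^r}=\infty$ from the preceding step forces $\|J^s_{1/\lambda}(\phi^2)\|_{L^r}\to\infty$. This contradicts the uniform constant $C$. The main technical obstacle is justifying these two limit passages---DCT on the right and Fatou on the left---carefully, especially in the quasi-Banach regime $r<1$.
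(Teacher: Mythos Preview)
Your approach is essentially identical to the paper's: the modulation counterexample for $s<0$, the slow-decay asymptotic $D^s(\phi^2)(x)\sim c_s|x|^{-n-s}$ for $0<s\le n/r-n$ (the paper packages this as Lemma~\ref{le:homKP}), and the scaling-plus-Fatou reduction from $J^s$ to $D^s$ for the inhomogeneous inequality all match the paper exactly.

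There is, however, one genuine gap in your inhomogeneous Case~2. You write that symbol domination $(\lambda^{-2}+|\xi|^2)^{s/2}\le (1+|\xi|^2)^{s/2}$ together with dominated convergence yields $\|J^s_{1/\lambda}\phi\|_{L^p}\to\|D^s\phi\|_{L^p}$. That domination, applied in the $\xi$-integral, only gives \emph{pointwise} (or $L^\infty$) convergence of $J^s_{1/\lambda}\phi(x)$ to $D^s\phi(x)$; it provides no $L^p(dx)$ majorant, so neither $L^p$ convergence nor a uniform $L^p$ bound follows. What you actually need is $\sup_{\lambda\ge 1}\|J^s_{1/\lambda}\phi\|_{L^p}<\infty$, and this requires a uniform spatial decay estimate. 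The paper devotes Lemma~\ref{le:inhom} to exactly this: it proves $|(\delta^2-\Delta)^{s/2}f(x)|\le C(n,s,f)(1+|x|)^{-n-s}$ uniformly for $\delta\in(0,1]$, which lies in $L^p$ since $p>1$. In your setup there is a shortcut available because you chose $\hat\phi$ compactly supported: integrating by parts $n$ times and using that $|\partial_\xi^\alpha(\lambda^{-2}+|\xi|^2)^{s/2}|\le C_\alpha|\xi|^{s-|\alpha|}$ uniformly in $\lambda\ge 1$ gives $|J^s_{1/\lambda}\phi(x)|\le C(1+|x|)^{-n}$ uniformly, which is in $L^p$ for every $p>1$. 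Either route closes the gap, but the bare DCT you invoke does not. You correctly flag this passage as ``the main technical obstacle,'' but the mechanism you propose for it is the wrong one.
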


 \emph{Upon completion of this manuscript, the authors discovered that Muscalu and Schlag \cite{MS2} had independently reached   conclusion \eqref{eq:kp} via a different approach, based on discretized paraproducts.  A version of Lemma~\ref{le:homKP} also appears in their text, but otherwise their approach is different from ours.}

The paper is organized as follows.  In Section~\ref{sec:prelim}, we introduce Littlewood-Paley operators and prove an estimate concerning them.  In Section~\ref{sec:homKP}, we  prove Theorems~\ref{th:KP1} and \ref{th:KP2} for the homogeneous version of inequality \eqref{eq:kp}.  In Section~\ref{sec:inhKP}, we   prove Theorems~\ref{th:KP1} and \ref{th:KP2} for the inhomogeneous version of inequality \eqref{eq:KatoPonce}.  In Section~\ref{sec:multiKP}, we   state and prove a multi-parameter generalization of Theorems~\ref{th:KP1} and \ref{th:KP2}.  

\section{Preliminaries}\label{sec:prelim}
We denote by $\langle x,y\rangle $ the inner product in $\br^n$. 
We use the notation $\Psi_t(x) = t^{-n} \Psi(x/t)$ when $t>0$ and $x\in \br^n$. 
We denote by $\mathcal S(\br^n)$ the space of 
all rapidly decreasing functions on $\br^n$ called Schwartz functions. We denote by 
$$
\wh{f}(\xi) =  \cf(f)(x)= \int_{\br^n} f(x) e^{-2\pi i \langle x,\xi\rangle }dx
$$
  the Fourier transform of a Schwartz function $f$ on $\br^n$. We also denote by 
  $\cf^{-1}(f)(x) = \cf(f)(-x)$ the inverse Fourier transform. We recall the classical multiplier result of 
Coifman and Meyer:

\begin{theoremA}[Coifman-Meyer] \label{th:CM}  
Let $m \in L^{\infty}(\br^{2n})$ be smooth away from the origin and satisfy
\[
|\p_{\xi}^{\al} \p_{\eta}^{\be} m| (\xi,\eta) \leq C(\al, \be) (|\xi|+ |\eta|)^{-|\al| - |\be|}
\]
for all $\xi, \eta \in \br\setminus\{0\}$ and $\al,\be \in \bz^n$ multi-indices with $|\al|, |\be| \le 2n+1$.  Then for all $f,g\in \cs(\rn)$, 
\[
\n{\int_{\br^{2n}} m(\xi,\eta) \wh{f}(\xi)\wh{g}(\eta) e^{i\lan{\xi+\eta, \cdot}} \, d\xi\,d\eta}{L^r(\rn)} \leq C(p,q,r, m) \n{f}{L^p(\rn)}\n{g}{L^q(\rn)}
\]
where $\f{1}{2}<r<\infty$, $1<p,q\leq \infty$ satisfy $\f{1}{r} = \f{1}{p}+\f{1}{q}$.  Furthermore, when either
$p$ or $q$ is equal to $1$, then the $L^r(\rn)$~norm on left hand side can be replaced by the $L^{r,\infty}(\rn)$~norm.
\end{theoremA}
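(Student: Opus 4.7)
The plan is to reduce the bilinear multiplier operator to three ``paraproducts'' via a frequency decomposition, and then to bound each paraproduct using Littlewood--Paley square-function theory together with the Fefferman--Stein vector-valued maximal inequality. Fix a smooth partition of unity $1=\sum_{k\in\bz}\psi_k(\xi)$ adapted to the dyadic annuli $|\xi|\sim 2^k$ and expand
\[
m(\xi,\eta)=\sum_{j,k\in\bz} m(\xi,\eta)\,\psi_j(\xi)\psi_k(\eta).
\]
I would split the $(j,k)$-sum into three pieces according to $k\le j-3$, $j\le k-3$, and $|j-k|\le 2$, corresponding respectively to the regions $|\eta|\ll|\xi|$, $|\xi|\ll|\eta|$, and $|\xi|\sim|\eta|$.

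For the first region, fix $j$ and observe that the partial symbol $\sigma_j(\xi,\eta):=\sum_{k\le j-3}m(\xi,\eta)\psi_j(\xi)\psi_k(\eta)$ is supported where $|\xi|\sim 2^j$ and $|\eta|\lesssim 2^j$ and, by the Coifman--Meyer hypothesis, satisfies uniform bump-type estimates at scale $2^j$. Expanding $\sigma_j$ in a Fourier series on a cube of side $\sim 2^{j+C}$ represents it as a rapidly convergent sum $\sum_\nu c_\nu^j\,\varphi_\nu^j(\xi)\chi_\nu^j(\eta)$ of tensor products of bumps, with coefficients of arbitrary polynomial decay in $\nu$. Consequently, up to a negligible error in $\nu$, the first-region contribution to $T(f,g)$ takes the paraproduct form $\sum_j(\De_j f)(S_j g)$, where $\De_j$ is a Littlewood--Paley projection at frequency $2^j$ and $S_j$ a smooth low-pass cutoff to $|\eta|\lesssim 2^j$. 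Using the pointwise bound $|S_j g|\lesssim Mg$ together with the Littlewood--Paley square-function characterization of $L^p$, H\"older's inequality yields
\[
\Big\|\sum_j(\De_j f)(S_j g)\Big\|_{L^r}\lesssim\Big\|\Big(\sum_j|\De_j f|^2\Big)^{1/2}\,Mg\Big\|_{L^r}\lesssim\|f\|_{L^p}\|g\|_{L^q}
\]
for $1<p,q\le\infty$ with $1/r=1/p+1/q$, passing through Hardy-space Littlewood--Paley theory when $r\le 1$. The second region is handled symmetrically. The third, diagonal region produces $\sum_j(\De_j f)(\wt\De_j g)$; since each summand has output Fourier-support in $|\zeta|\lesssim 2^j$, inserting a Littlewood--Paley decomposition on the output side and applying Cauchy--Schwarz termwise gives the same $L^r$-bound via the square-function inequality.

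The main obstacle will be the weak-type endpoint when one of $p,q$ equals $1$. For this I would apply the Calder\'on--Zygmund decomposition to the $L^1$ input at height $\al$, writing $f=h+b$ with $\|h\|_{L^\infty}\lesssim\al$ and $b$ supported on a union $\Om$ of dyadic cubes of total measure $\lesssim\al^{-1}\|f\|_{L^1}$. The $h$-contribution is absorbed by the strong bound already established; for the $b$-contribution one removes a bounded dilate of $\Om$ from the super-level set and exploits off-diagonal decay of the kernel of each paraproduct piece to estimate the remainder in terms of $\al$ and $\|g\|_{L^q}$. This endpoint step parallels the classical linear Calder\'on--Zygmund weak-type argument but must be carried out separately on each of the three paraproduct blocks while tracking the bilinear interaction, and that is where most of the technical work resides.
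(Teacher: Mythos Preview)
The paper does not prove Theorem~A; it is merely \emph{recalled} as the classical Coifman--Meyer multiplier theorem and cited to \cite{CM}, so there is no ``paper's own proof'' to compare against. Your sketch is therefore not competing with anything in the text.

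That said, as a standalone outline your approach is the standard one and is broadly correct: paraproduct decomposition into high-low, low-high, and diagonal pieces; Fourier-series tensorization of the localized symbol using the Coifman--Meyer derivative bounds; square-function/maximal-function estimates for the high-low pieces; and a diagonal argument via output-side Littlewood--Paley. Two points deserve more care than you give them. First, for $r\le 1$ the square-function bound on the output side of the diagonal piece genuinely requires the $H^p$ Littlewood--Paley characterization (or an equivalent device), not just ``passing through Hardy-space theory''; this is where a casual argument can break. Second, and more substantively, the weak-type endpoint is not a routine adaptation of the linear Calder\'on--Zygmund argument: when, say, $p=1$ and $q<\infty$ one lands in $L^{r,\infty}$ with $r<1$, and the off-diagonal kernel estimate must be combined with a bilinear Calder\'on--Zygmund decomposition in the sense of Grafakos--Torres/Kenig--Stein. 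Your last paragraph acknowledges that ``most of the technical work resides'' there, which is accurate, but the sketch as written does not indicate how the bad-part estimate actually closes in the bilinear setting. If you intend this as a proof rather than a summary, that step needs to be written out.
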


We recall the following version of the Littlewood-Paley theorem. 

\begin{theorem}\label{th:LP}
Suppose that $\wh{\Psi}$ is an integrable   function on $\rn$  that satisfies 
\begin{equation}\lab{5.1.3-1aa}
 \sum_{j\in \zzz}  |\Psi(2^{-j} \xi)|^2 \le B^2
\end{equation} 
and 
\begin{equation}\lab{5.1.3-1a}
\sup_{y\in \rn\setminus\{0\}}\sum_{j\in \zzz}  
\int_{|x|\ge 2|y|}\big|\wh{\Psi}_{ 2^{-j}}(x-y) -\wh{\Psi}_{ 2^{-j}}( x)\big| dx \le B
\end{equation} 
Then  there exists  a constant  $  C_{n } <\nf $ 
such that for all   $1< p<\nf$ and all $f$ in $\lprn$,
\begin{equation}\lab{5.1.4-1}
 \Big\|\Big(\sum_{j\in \zzz} |\De_j
(f)|^2\Big)^{\f12}\Big\|_{\lprn} \le C_n B\max\big(p,(p-1)^{-1}\big)  
\big\|f\big\|_{\lprn}
\end{equation}
where $\De_j f := \wh{\Psi}_{2^{-j}} * f$.  There also exists a $ C_{n }'<\nf$  
such that for all $f$ in $ L^1(\rn)$,
\begin{equation}\lab{5.1.4-1-1}
 \Big\|\Big(\sum_{j\in \zzz} |\De_j
(f)|^2\Big)^{\f12}\Big\|_{L^{1,\nf}(\rn)} \le C_{n}'B  \big\|f\big\|_{L^1(\rn)}.
\end{equation}
\end{theorem}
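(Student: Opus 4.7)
The plan is to recast the square function as a vector-valued singular integral operator and invoke the Banach-space-valued Calderón-Zygmund theorem with target $\ell^2(\zzz)$. Define $T\colon f\mapsto \{\De_j f\}_{j\in\zzz}$, a linear map from scalar Schwartz functions into $\ell^2(\zzz)$-valued functions, with $\ell^2(\zzz)$-valued convolution kernel $\mathbf{K}(x)=\{\wh\Psi_{2^{-j}}(x)\}_{j\in\zzz}$. Then
$$
\Big\|\Big(\sum_{j\in\zzz}|\De_j f|^2\Big)^{1/2}\Big\|_{\lprn}=\|Tf\|_{L^p(\rn;\ell^2)},
$$
so it suffices to bound $T$ on the appropriate spaces.

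Hypothesis \eqref{5.1.3-1aa} combined with Plancherel yields the $L^2$ estimate
$$
\|Tf\|_{L^2(\rn;\ell^2)}^2=\sum_{j\in\zzz}\int_{\rn}|\Psi(2^{-j}\xi)|^2|\wh f(\xi)|^2\,d\xi\le B^2\|f\|_{L^2}^2.
$$
Hypothesis \eqref{5.1.3-1a}, combined with the elementary pointwise inequality $\|\{a_j\}\|_{\ell^2}\le \sum_j|a_j|$ and Fubini's theorem, yields the vector-valued Hörmander condition
$$
\sup_{y\neq 0}\int_{|x|\ge 2|y|}\|\mathbf{K}(x-y)-\mathbf{K}(x)\|_{\ell^2}\,dx\le B.
$$

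With these two inputs, a Calderón-Zygmund decomposition of $f\in L^1(\rn)$ at a fixed height $\al$, together with the $L^2$ bound applied to the good part and the Hörmander estimate applied to the bad part, produces the weak-type $(1,1)$ bound \eqref{5.1.4-1-1} with constant $C_n'B$. Marcinkiewicz interpolation against the $L^2$ bound then yields \eqref{5.1.4-1} for $1<p\le 2$ with constant of order $B/(p-1)$. For $2<p<\nf$, one passes to the adjoint $T^*\colon L^{p'}(\rn;\ell^2)\to L^{p'}(\rn)$, which is convolution with the reflected and conjugated kernel and therefore satisfies the same $L^2$ and Hörmander estimates; applying the previous argument to $T^*$ at exponent $p'\in(1,2)$ and dualizing gives the bound on $T$ with constant of order $B(p-1)\lesssim Bp$. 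The two ranges combine to yield the stated dependence $C_nB\max(p,(p-1)^{-1})$.

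The mathematical substance of the proof is concentrated in the two verifications above; the rest is the routine vector-valued Calderón-Zygmund machinery. The only point meriting real attention is the passage from the scalar hypothesis \eqref{5.1.3-1a} to the $\ell^2$-valued Hörmander condition needed by the vector-valued theorem, and this passage is actually free because sequences satisfy $\|\cdot\|_{\ell^2}\le \|\cdot\|_{\ell^1}$, so the scalar hypothesis is in fact strictly stronger than what is strictly required. The hypotheses of the theorem have been calibrated precisely so that this step is automatic.
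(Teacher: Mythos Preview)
Your proposal is correct and follows essentially the same approach as the paper's proof: both recast the square function as a vector-valued singular integral $f\mapsto\{\De_j f\}_j$, verify the $L^2$ bound from \eqref{5.1.3-1aa} via Plancherel, deduce the $\ell^2$-valued H\"ormander condition from \eqref{5.1.3-1a} using $\|\cdot\|_{\ell^2}\le\|\cdot\|_{\ell^1}$, and then appeal to vector-valued Calder\'on--Zygmund theory. The only cosmetic difference is that the paper cites Theorem~4.6.1 of \cite{G1} as a black box, whereas you sketch its internal steps (Calder\'on--Zygmund decomposition, Marcinkiewicz interpolation, and duality for $p>2$).
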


\begin{proof}
We make a few remarks about the proof.  
Clearly the required estimate holds when $p=2$ in view of \eqref{5.1.3-1aa}. To obtain estimate 
\eqref{5.1.4-1-1} and thus the case $p\neq 2$, we 
define an operator $\vec T$  acting on functions on 
$\rn$ as follows:
$$
 \vec T (f)(x) =\{  \De_j (f)(x)\}_j\, . 
$$
The inequalities   (\ref{5.1.4-1}) and   (\ref{5.1.4-1-1})          
 we wish to prove  say simply   that 
$\vec T$ is a bounded operator from $L^p(\rn, \ccc)$ to 
$L^p(\rn , \ell^2)$ and from $L^1(\rn, \ccc)$ to 
$L^{1,\nf}(\rn , \ell^2)$. 
We just proved  that this statement is true when $p=2$, and
therefore  the first  hypothesis of Theorem 4.6.1 in \cite{G1} is
satisfied.  
We now observe that the operator $\vec T$ 
can be written in the  form 
$$
 \vec T (f)(x)=   
\bigg\{ \intrn \wh{\Psi}_{2^{-j}}(x-y) f(y)\, dy  \bigg\}_j=
\intrn \vec K(x-y)(f(y))\, dy,
$$
where for each $x\in \rn$, $\vec K(x)$ is a bounded linear 
operator from $\ccc$ to $\ell^2$ given by 
\begin{equation}\lab{5.1.10-00-00}
\vec K(x)(a)= \{\wh{\Psi}_{2^{-j}}(x)a\}_j.
\end{equation}
We clearly have that $\big\|\vec K(x)\big\|_{\ccc\to \ell^2} =
\big(\sum_j |\wh{\Psi}_{2^{-j}} (x)|^2\big)^{\f12}$, and to be able to apply 
Theorem  4.6.1 in \cite{G1} we  need to know that 
\begin{equation}\lab{5.1.10}
\int_{|x|\ge 2|y|} \big\|\vec K(x-y)-\vec K(x)\big\|_{\ccc\to \ell^2}\, dx \le C_n B,
\qq  y\neq 0.
\end{equation}
 
We clearly have  
\begin{align*}
\big\|\vec K(x-y)-\vec K(x)\big\|_{\ccc\to \ell^2} 
\,\,=&\,\, \bigg(\sum_{j\in \zzz}\big|\wh{\Psi}_{ 2^{-j}}(x-y) -\wh{\Psi}_{ 2^{-j}}( x)\big|^2 \bigg)^{\!\f12} \\  
\,\,\le &\,\, \sum_{j\in \zzz}  
\big|\wh{\Psi}_{ 2^{-j}}(x-y) -\wh{\Psi}_{ 2^{-j}}( x)\big|  \end{align*}
and so condition \eqref{5.1.3-1a} implies \eqref{5.1.10}.
\end{proof}

\begin{corollary}\label{cor:m}
Let $m\in \zn\setminus\{0\}$ and $\wh{\Psi}(x) = \wh{\psi}(x+m)$ for some Schwartz function $\psi$ supported in the annulus $1/2\le |\xi| \le 2$. Then for all $1<p<\infty$, 
\begin{equation}
 \Big\|\Big(\sum_{j\in \zzz} |\De_j
(f)|^2\Big)^{\f12}\Big\|_{\lprn} \le C_n \ln (1+|m|)\max\big(p,(p-1)^{-1}\big)  
\big\|f\big\|_{\lprn}. 
\end{equation}
There also exists $C_n<\infty$ such that for all $f\in L^1(\rn)$,
\begin{equation}
 \Big\|\Big(\sum_{j\in \zzz} |\De_j
(f)|^2\Big)^{\f12}\Big\|_{L^{1,\infty}(\rn)} \le C_n \ln (1+|m|)
\big\|f\big\|_{L^1(\rn)}. 
\end{equation}
\end{corollary}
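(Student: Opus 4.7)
The plan is to deduce Corollary~\ref{cor:m} from Theorem~\ref{th:LP} by verifying its two hypotheses \eqref{5.1.3-1aa} and \eqref{5.1.3-1a} with the parameter $B = C_n \ln(1+|m|)$.  Taking the inverse Fourier transform of the identity $\wh{\Psi}(x) = \wh{\psi}(x+m)$ gives $\Psi(\xi) = e^{-2\pi i\langle m,\xi\rangle}\psi(\xi)$, so $|\Psi(\xi)| = |\psi(\xi)|$ is supported in the annulus $\{1/2 \le |\xi| \le 2\}$.  At any fixed $\xi$ at most two values of $j$ contribute to the square sum, so hypothesis \eqref{5.1.3-1aa} holds with a constant depending only on $\psi$ and independent of $m$.

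The substance of the proof is the verification of the H\"ormander-type condition \eqref{5.1.3-1a}.  Set $\phi := \wh{\psi}$, a Schwartz function since $\psi\in C_c^\infty(\rn)$.  The scaling identity yields $\wh{\Psi}_{2^{-j}}(x) = 2^{jn}\phi(2^jx + m)$, and the changes of variable $w = 2^jx + m$ and $v := 2^j y$ reduce the $j$-th summand in \eqref{5.1.3-1a} to
\[
T_j(y) \;:=\; \int_{|w-m|\ge 2|v|}\bigl|\phi(w-v) - \phi(w)\bigr|\,dw, \qquad v = 2^j y.
\]
The task becomes showing $\sum_{j\in\bz} T_j(y) \le C_n \ln(1+|m|)$ uniformly in $y\neq 0$.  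I would split the sum according to the size of $|v|=2^j|y|$ into three regimes.  For $|v|\le 1$, drop the domain restriction and apply the mean value theorem to get $T_j(y) \le |v|\,\|\nabla\phi\|_{L^1}$; a geometric sum then contributes $O(1)$.  For $1 < |v| \le 2|m|$, use the trivial bound $T_j(y) \le 2\|\phi\|_{L^1}$; since the number of such $j$ is $O(\ln|m|)$, this regime contributes $O(\ln|m|)$.  For $|v| > 2|m|$, the constraint $|w-m|\ge 2|v|$ combined with $|v|>2|m|$ forces $|w|\ge 3|v|/2$ and $|w-v|\ge |v|/2$, so the Schwartz decay of $\phi$ yields $T_j(y) \le C_N |v|^{n-N}$ for any $N>n$, and a geometric sum gives $O(1)$.

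The main obstacle is the intermediate regime, where no nontrivial cancellation is available and the logarithmic loss comes strictly from counting the number of relevant dyadic scales.  Assembling the three bounds gives \eqref{5.1.3-1a} with $B = C_n\ln(1+|m|)$, and applying Theorem~\ref{th:LP} yields simultaneously the strong-type $L^p$ estimate for $1<p<\infty$ and the weak-type $L^{1,\infty}$ estimate at the endpoint, with the claimed constants.
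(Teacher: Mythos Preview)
Your proof is correct and follows essentially the same route as the paper: verify \eqref{5.1.3-1aa} via the annular support of $\psi$, then estimate \eqref{5.1.3-1a} by splitting into the same three regimes $|v|\le 1$, $1<|v|\le 2|m|$, and $|v|>2|m|$ (the paper's Cases 3, 2, 1 respectively), with the logarithmic loss arising solely from counting scales in the middle regime. The only trivial differences are notational, plus a harmless overcount (at most three, not two, dyadic shells can overlap in \eqref{5.1.3-1aa}) and a slightly sharper tail bound in your large-$|v|$ case.
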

\begin{proof}
Note
$$
\Psi(\xi) = \psi(\xi) e^{2\pi i \lan{m, \xi}}.
$$
The fact that $\Psi$ is supported  
in the annulus $1/2\le |\xi| \le 2$ implies condition \eqref{5.1.3-1aa} for $\Psi $. 
We now focus on condition \eqref{5.1.3-1a} for $\wh{\Psi}$. 

We fix a nonzero $y$ in $\rn$ and $j\in \zzz$. We look at 
$$
\int_{|x|\ge 2|y|}\big|\wh{\Psi}_{ 2^{-j}}(x-y) -\wh{\Psi}_{ 2^{-j}}( x)\big| dx
=\int_{|x|\ge 2|y|}2^{jn} \big|\wh{\psi}( 2^jx-2^jy+m) -\wh{\psi}(2^jx +m)  \big| dx
$$
Changing variables we can write the above as 
$$
I_j=\int_{|x|\ge 2|y|}\big|\wh{\Psi}_{ 2^{-j}}(x-y) -\wh{\Psi}_{ 2^{-j}}( x)\big| dx
=\int_{|x-m|\ge 2^{j+1}|y|}  \big|\wh{\psi}( x-2^jy ) -\wh{\psi}(x)  \big| dx
$$

\textbf{Case 1:}  $2^j \ge 2\, |m|\, |y|^{-1}$. In this case we estimate $I_j$ by 
$$
\int_{|x-m|\ge 2^{j+1}|y|} \f{c}{(1+|x-2^jy |)^{n+2}} dx+ \int_{|x-m|\ge 2^{j+1}|y|}\f{c}{(1+|x  |)^{n+2}} \, dx
$$
$$=
\int_{|x+2^jy-m|\ge 2^{j+1}|y|} \f{c}{(1+|x |)^{n+2}} dx+ \int_{|x-m|\ge 2^{j+1}|y|}\f{c}{(1+|x  |)^{n+2}} \, dx
$$
Suppose that $x$ lies in the domain of integration of the first integral. Then 
$$
|x|\ge |x+2^jy-m|-2^j|y|-|m|\ge 2^{j+1}|y|-2^j|y|- \f12 \, 2^j|y| = \f12 \, 2^j|y|.
$$
If $x$ lies in the domain of integration of the second integral, then 
$$
|x|\ge |x -m| -|m|\ge  2^{j+1}|y|-|m| \ge 2^{j+1}|y| - \f12 \, 2^j|y| =  \f32 \, 2^j|y|.
$$
In both cases we have 
$$
I_j \le 2 \int_{|x | \ge \f12 \, 2^j  |y|}\f{c}{(1+|x  |)^{n+2}} \, dx \le \f{C}{2^j |y|}
 \int_{\rn}\f{1}{(1+ |x |)^{n+1}} \, dx \le \f{C_n}{2^j|y|},
$$
and clearly 
$$
\sum_{j:\,\, 2^j|y|\ge 2|m|} I_j \le \sum_{j:\,\, 2^j|y|\ge 2 } I_j \le C_n.
$$

\textbf{Case 2:}  $|y|^{-1}\le 2^j \le 2\, |m|\, |y|^{-1}$. The number of $j$'s in this case are $O(\ln |m|)$.  Thus, uniformly bounding $I_j$ by a constant, we obtain
\[
\sum_{j: 1 \le 2^j|y| \le 2|m|} I_j \leq C_n (1+\ln |m|).
\]

\textbf{Case 3.} $2^j\le |y|^{-1}$. In this case we have 
$$
\big| \psi( x-2^jy ) -\psi(x) \big| =\left| \int_0^1 2^j \lan{\nabla \psi (x-2^j t y), y}\, dt\right| \le 
2^j |y| \int_0^1 \f{c}{(1+|x-2^j t y|)^{n+1}}\, dt\, . 
$$
Integrating over $x \in \rn$ gives the bound  $I_j \le C_n\, 2^j |y|$.  Thus, we obtain
\[
\sum_{j: 2^j|y| \le 1} I_j \le C_n.
\]

Overall, we obtain the bound $C_n \ln (1+|m|)$ for \eqref{5.1.3-1a}, which yields the desired statement by Theorem~\ref{th:LP}.
\end{proof}

\section{Homogeneous Kato-Ponce inequality}\label{sec:homKP}

In the following lemma, we recall the explicit formula for the Riesz potential described in \cite{G1}.
\begin{lemma}\label{le:homKP}
Let $f\in \cs(\rn)$ be fixed.  Given $s>0$, define $f_s := D^s f$.  Then $f_s$ lies in  $ L^{\infty} (\rn)$ and satisfies the following asymptotic estimate:
\begin{itemize}
\item  There exists a constant $C(n,s,f)$ such that
\begin{equation}\label{eq:est1}
|f_s(x)| \leq C(n,s,f) |x|^{-n-s}\qq \forall x:\, |x|>1.
\end{equation}
\item Let $s\not\in 2\bn$.  If $f(x)\geq 0$  for $\forall x\in \rn$ and $f \not\equiv 0$, then there exists $R\gg 1$ and a constant~$C(n,s,f,R)$ such that
\begin{equation}\label{eq:est2}
|f_s(x)| \geq C(n,s,f,R) |x|^{-n-s}\qq  \forall x:\,|x|>R. 
\end{equation}
\end{itemize}
\end{lemma}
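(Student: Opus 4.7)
The plan is to combine the Fourier characterization of $D^s$ with the explicit convolution kernel recalled from \cite{G1} and decompose the resulting integral into three natural regions relative to $x$. The membership $f_s\in L^\infty(\rn)$ is immediate: since $\wh{f_s}(\xi)=(2\pi|\xi|)^s\wh f(\xi)$ and $\wh f\in\cs(\rn)$, and $|\xi|^s$ is locally integrable at $0$ for $s>0$, the product is in $L^1(\rn)$, so $f_s\in L^\infty\cap C$. When $s\in 2\bn$, $D^s=(-\De)^{s/2}$ is a differential operator, $f_s\in\cs(\rn)$, and \eqref{eq:est1} is trivial; henceforth assume $s\notin 2\bn$. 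In this range, the Fourier identity for homogeneous distributions provides a nonzero constant $\ga_{n,s}$ and a tempered distribution $K_s$ with $K_s\big|_{\rn\setminus\{0\}}=\ga_{n,s}|y|^{-n-s}$ (defined near $0$ by the standard Taylor-subtraction renormalization) such that $D^s f=K_s*f$.

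For \eqref{eq:est1}, fix $|x|>1$ and split $K_s*f(x)$ over three regions in the convolution variable $z$: (i) $|z|\leq 1$; (ii) $|z|>1$ with $|z-x|>|x|/2$; and (iii) $|z-x|\leq |x|/2$. In (i) and (ii) one has $|x-z|\geq |x|/2$, so Schwartz decay of $f$ and all its derivatives (the latter used to bound the renormalized pairing in (i)) makes these contributions $O_N(|x|^{-N})$ for every $N$. In (iii) one has $|z|\in[|x|/2,3|x|/2]$, hence $|z|^{-n-s}\leq (|x|/2)^{-n-s}$ and
\[
\Big|\ga_{n,s}\int_{|z-x|\leq |x|/2}|z|^{-n-s}f(x-z)\,dz\Big|\leq C_{n,s}|x|^{-n-s}\n{f}{L^1(\rn)},
\]
which yields \eqref{eq:est1}. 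The same decomposition proves \eqref{eq:est2}: the region-(iii) term is the main term, and since $f\geq 0$ and $|z|^{-n-s}\geq (3|x|/2)^{-n-s}$ there, it is bounded below in absolute value by
\[
|\ga_{n,s}|\left(\tfrac{3|x|}{2}\right)^{-n-s}\int_{|w|\leq |x|/2}f(w)\,dw.
\]
Because $f\in\cs(\rn)$, $f\geq 0$, and $f\not\equiv 0$, this integral exceeds $\tfrac{1}{2}\int_{\rn}f>0$ once $|x|\geq R$ for a sufficiently large $R=R(f)$. Combined with $\ga_{n,s}\neq 0$ (valid precisely when $s\notin 2\bn$), \eqref{eq:est2} follows.

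The main obstacle is the nonvanishing of $\ga_{n,s}$: it is equivalent to $s\notin 2\bn$, arising from the standard $\Gamma$-function formula for Fourier transforms of homogeneous distributions, and it is the analytic reason the lower bound must fail for $s\in 2\bn\cup\{0\}$, where $D^s$ becomes a local (differential) operator and $f_s$ is itself Schwartz. A minor technicality is that for $s\geq n$ the distribution $K_s$ near the origin requires higher-order Taylor subtractions; however, because $f(x-\cdot)$ and all its derivatives decay rapidly at $z=0$ for large $|x|$, this renormalization affects only region (i) and contributes to the already-negligible $O_N(|x|^{-N})$ error, so it does not obstruct the argument.
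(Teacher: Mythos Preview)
Your proof is correct and follows essentially the same route as the paper: both express $D^s f$ via the homogeneous kernel $\gamma_{n,s}|y|^{-n-s}$ (equivalently the analytic continuation of $\langle u_{-n-s}, f(\cdot+x)\rangle$ from \cite{G1}), split the resulting integral into a renormalized near-origin piece, a far piece, and a main piece $|z|\sim|x|$, and show that the first two are $O(|x|^{-N})$ while the third produces the sharp $|x|^{-n-s}$ upper and lower bounds, with $\gamma_{n,s}\neq 0$ precisely when $s\notin 2\bn$. The only cosmetic point is that your three regions are disjoint and exhaustive only for $|x|>2$, but since $f_s\in L^\infty$ handles $1<|x|\le 2$, this is harmless.
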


\begin{proof}
For any $z\in \ccc$ with $\textnormal{Re}\, z >-n$ and $g\in \cs(\rn)$, define the distribution $u_z$ by
\begin{equation}\label{def:uz}
\lan{u_z, g} := \int_{\rn} \f{\pi^{\f{z+n}{2}}}{\Ga \left(\f{z+n}{2}\right)} |x|^z g(x)\, dx,
\end{equation}
where $\Ga(\cdot)$ denotes the gamma function.  We recall Theorem~2.4.6 of \cite{G1} and the preceding remarks:
\begin{itemize}
\item For any $g\in \cs(\rn)$, the map $z\mapsto \lan{u_z, g}$ on the half-plane $\textnormal{Re}\, z> -n$ has an holomorphic extension to the entire complex plane.

\item $\lan{u_z, \wh{g}\,} = \lan{u_{-n-z}, \wh{g}\,}$, where $\lan{u_z, f}$ is understood as the holomorphic extension when $\textnormal{Re}\, z\leq -n$.

\item Both $u_z,\, u_{-n-z} \in L^1_{\textnormal{loc}}$ if and only if $-n< \textnormal{Re}\, z <0$, in which case both $u_z$ and $\wh{u_z}$ are well-defined by \eqref{def:uz}.
\end{itemize}

Now, fix $f\in \cs(\rn)$ note that for $s>0$,
\[
f_s (x) = \int_{\rn} |\xi|^s \wh{f}(\xi) \, e^{2\pi i \lan{\xi, x}}\, d\xi = \f{\Ga \left(\f{s+n}{2}\right)}{\pi^{\f{s+n}{2}}} \lan{u_s, \wh{f}(\cdot) e^{2\pi i \lan{\cdot, x}}}.
\]
Furthermore, note that the constant $\Ga \left(\f{s+n}{2}\right) / \pi^{\f{s+n}{2}} \neq 0$ when $s > 0$.  Thus it suffices prove the estimates \eqref{eq:est1} and \eqref{eq:est2} for $\lan{u_s,  \wh{f}(\cdot) e^{2\pi i \lan{\cdot, x}}}$.  We extend the map $s\mapsto \lan{u_s,  \wh{f}(\cdot) e^{2\pi i \lan{\cdot, x}}}$ to an entire function and replace $s\in \br_+$ by $z\in \ccc$.

Applying Theorem~2.4.6 of \cite{G1}, we obtain
\[
\lan{u_z,  \wh{f}(\cdot) e^{2\pi i \lan{\cdot, x}}} = \lan{u_{-n-z}, f(\cdot + x)}.
\]
For $z: -n< \textnormal{Re} \,z <0$, we can use \eqref{def:uz} to write
\[
\lan{u_{-n-z}, f(\cdot + x)} =   \int_{\rn} \f{\pi^{-\f{z}{2}}}{\Ga \left(-\f{z}{2}\right)} |y|^{-n-z} f(x+y)\, dy.
\]
We split integral in the right hand side into $\int_{|y|\leq 1}\cdot \, dy + \int_{|y|> 1} \cdot\, dy =: I_1 (x,z) + I_2 (x,z)$. 

First, we recall the expression (2.4.7) in \cite{G1} which shows that $I_1(x,z)$ can be extended to an entire function in $z \in \ccc$ so that for any $z$ with $ \textnormal{Re} \, z < N$,  for some $N\in \bn$,  $I_1 (x,z)$ can be computed via the following formula:
\begin{align*}
I_1 (x,z) &= \sum_{|\al| \leq N}  b(n,\al,z) \p^{\al} \lan{\p^{\al} \de_0, f(\, \cdot \, + x)} \\
	&+ \int_{|y|<1} \f{\pi^{-\f{z}{2}}}{\Ga \left(-\f{z}{2}\right)} \left\{ f(x+y) - \sum_{|\al|\leq N} \f{(\p^{\al} f)(x)}{\al !} y^{\al} \right\}\, |y|^{-n-z}\, dy\, ,
\end{align*}
where $\al\in \bz^n_+$ is a multi-index and $b(n,\al,z)$ is an entire function for any given $n,\al$.  From this formula, we remark that for a fixed $z: \, 0<\textnormal{Re}\, z <N$, there exists $C(z, n, N)$ such that
\[
|I_1 (x,z)| \leq C(z,n,N) \left( \sum_{|\al|\leq N} {  |\p^{\al}f |(x)} +  \sup_{|y|\leq 1}  \sum_{|\be|= N+1} \sup {  |  \p^{\be} f  | (x+y)}\right).
\]
Note that $I_1 (x,z)$ decays  like a Schwartz function for any fixed $z\in \br_+$.

Now we consider $I_2(x,z)$, which is also an entire function in $z$.  For $z$ satisfying 
$ -n< \textnormal{Re}\, z<0$, this entire function is given by
\begin{equation}\label{eq:homI2}
I_2(x,z) =  \int_{|x-y|>1} \f{\pi^{-\f{z}{2}}}{\Ga \left(-\f{z}{2}\right)} |x-y|^{-n-z} f(y)\, dy.
\end{equation} 
Note that \eqref{eq:homI2} is valid for any $z\in \ccc$, so this gives an exact expression for $I_2(x,z)$.  It is important to notice that the constant $C_z := \pi^{-\f{z}{2}} /\Ga \left(-\f{z}{2}\right)$ vanishes when $z$ is a positive even integer because of   the poles of $\Ga(\cdot)$.  However, if $z \notin 2\bz_{+}$, then $C_z \neq 0$.

Let $z \in \br_+ \setminus 2\bn$. It is easily seen from \eqref{eq:homI2} that, given $z\in \br_+$, $I_2(x,z)$ is bounded for all $x\in \rn$.  Now we consider the decay rate of $I_2(x,z)$ for $|x|>2$.

Split the integral in \eqref{eq:homI2} into two regions: $I_2^1:= \int_{|x|\leq 2|y|}\cdot \, dy$ and $I_2^2:= \int_{|x| > 2|y|}\cdot \, dy$.  For the first integral, there is some constant $C(z,K)>0$ satisfying
\[
|I_2^1(x,z)| \leq |C_z| \int_{|y| \geq |x|/2} |f(y)|\, dy  = \f{|C_z|}{(1+|x|/2)^K} \int_{\rn} (1+ |y|)^K |f(y)|\, dy \leq \f{C(z,K) }{(1+ |x|)^K}
\]
for any $K\in \bn$.  Thus, over this region, $I_2(\cdot, z)$ decays like a Schwartz function.  For the remaining integral, we can drop the condition $|x-y|>1$ and write
\[
I_2^2 (x,z) = C_z \int_{|x|>2|y|}  |x-y|^{-n-z} f(y)\, dy
\]
Note that $\f{1}{2}|x|< |x-y| < \f{3}{2}|x|$ whenever $|x| > 2|y|$.  Thus, $I_2^2 (x,z) \leq C_{n,z} 1/|x|^{n+z}$ for $|x|>2$.  This proves \eqref{eq:est1}.

Moreover, if $f(y)\geq 0$ for $\forall y \in \rn$ but $f\not\equiv 0$, we have
\[
|I_2^2(x,z)| \geq \left(\f{3}{2}\right)^{n+z} \f{|C_{z}|}{|x|^{n+z}} \int_{|x|>2|y|} f(y)\, dy\, . 
\]
Taking $|x|$ large enough so that $f\not\equiv 0$ on the ball $B_{|x|/2}:= \{y\in \rn: 2|y|<|x|\}$, the integral above is bounded from below.  This proves \eqref{eq:est2}.
\end{proof}

\begin{proof}[Proof of Theorem~\ref{th:KP2} for the homogeneous case]
Note that \eqref{eq:est2} states that for all $s$ satisfying $s\neq 2k$ for some $k\in \bn$ and $0<s\leq \f{n}{r}-n$, $D^s f \not\in L^r(\rn)$.  In particular, we can choose any non-zero function $f\in \cs(\rn)$ so that $D^s |f|^2 \notin L^r(\rn)$.  On the other hand, \eqref{eq:est1} tells us that $D^s f \in L^p(\rn)$ for any $s>0$ and $p\geq 1$.  This 
disproves inequality \eqref{eq:kp} when $0<s\leq \f{n}{r}-n$, hence proves Theorem~\ref{th:KP2} in this case.  We remark also that if $s>\f{n}{r}-n$, then $D^s f\in L^r(\rn)$ for any $f\in \cs(\rn)$.

Now consider the case $\f{n}{r}-n<s<0$.  Let $\Phi, \Psi\in \cs(\rn)$ be real-valued radial functions where $\Phi$ is supported on a ball of radius $1$, and $\Psi \equiv 1$ on $\{\xi\in \rn: \,\f{1}{2}\leq |\xi| \leq 2\}$ and is supported on a larger annulus.  In \eqref{eq:kp}, let $f(x)= f_k(x):= e^{i2^k \lan{e_1, x}}\wh{\Phi}(x)$ and $g(x) =g_k(x):= e^{-i2^{k}\lan{e_1,x}} \wh{\Phi}(x)$ with $|k|\gg 1$.  Then the left hand side of \eqref{eq:kp} is independent of $k$.  On the other hand, consider the first term~$\n{D^s f}{L^p} \n{g}{L^q}$ from the right hand side.  $\n{g}{L^q}$ is independent of $k$, while 
\begin{align*}
[D^s f](x) &=  \int_{\rn} |\xi|^s \Psi(2^{-k} \xi) \Phi (\xi- 2^k e_1) e^{2\pi i \lan{ \xi, x}} \, d\xi\\
	&= 2^{ks} \int_{\rn} \Psi_s (2^{-k}\xi) \Phi (\xi - 2^k e_1) e^{2\pi i \lan{\xi, x}}\, d\xi
\end{align*}
where $\Psi_s(\cdot) := |\cdot|^{s} \Psi(\cdot)$.  Thus we have
\[
\n{D^s f}{L^p} = 2^{ks} \n{ 2^{kn} [\wh{\Psi_s} (2^{k}\cdot)] *[e^{i\lan{2^ke_1, \cdot}} \wh{\Phi}]}{L^p} \leq 2^{ks} \n{\wh{\Psi_s}}{L^1} \n{\wh{\Phi}}{L^p}.
\]
Note that since $\Psi$ is supported on an annulus, $\Psi_s \in \cs(\rn)$ so that $\wh{\Psi_s} \in \cs(\rn) \subset L^1(\rn)$.  Thus this term converges to zero as $k\to \infty$.  The second term on the right hand side of \eqref{eq:kp} is estimated similarly, which leads to a contradiction.  
\end{proof}

\begin{proof}[Proof of Theorem~\ref{th:KP1} in the homogeneous case]

Define $\Phi \in \cs(\br)$ so that $\Phi \equiv 1$ on $[-1,1]$ and is supported in $[-2,2]$.  Also let $\Psi(\xi) := \Phi(\xi) - \Phi(2\xi)$ and note that $\vp$ is supported on an annulus $\xi: \, 1/2< |\xi|<2$ and $\sum_{k\in \bz} \vp( 2^{-k} \xi)=1$ for $\forall \xi \neq 0$.

Given $f,g\in \cs(\rn)$, we decompose $D^s [fg]$ as follows:
\begin{align*}
D^s [fg] (x) &= \int_{\br^{2n}} |\xi+\eta|^s \wh{f}(\xi) \wh{g}(\eta) e^{2\pi i \lan{\xi+\eta,x}}\, d\xi \, d\eta\\
	&= \int_{\br^{2n}} |\xi+\eta|^s \left( \sum_{j\in \bz} \vp (2^{-j}\xi) \wh{f}(\xi) \right) \left( \sum_{k\in \bz} \vp (2^{-k} \eta) \wh{g}(\eta)\right) e^{2\pi i \lan{\xi+\eta,x}}\, d\xi\, d\eta\\
	&= \sum_{j\in \bz} \sum_{k: k <j-1}\int_{\br^{2n}} |\xi+\eta|^s  \vp (2^{-j}\xi) \wh{f}(\xi) \vp (2^{-k} \eta) \wh{g}(\eta) e^{2\pi i \lan{\xi+\eta,x}}\, d\xi\, d\eta\\
	&\q+\sum_{k\in \bz} \sum_{j: j <k-1}\int_{\br^{2n}} |\xi+\eta|^s  \vp (2^{-j}\xi) \wh{f}(\xi) \vp (2^{-k} \eta) \wh{g}(\eta) e^{2\pi i \lan{\xi+\eta,x}}\, d\xi\, d\eta\\
	&\q+ \sum_{k\in\bz}\sum_{j: |j-k|\leq 1} \int_{\br^{2n}} |\xi+\eta|^s  \vp (2^{-j}\xi) \wh{f}(\xi) \vp (2^{-k} \eta) \wh{g}(\eta) e^{2\pi i \lan{\xi+\eta,x}}\, d\xi\, d\eta\\
	&=: \Pi_1[f,g](x) + \Pi_2[f,g](x) + \Pi_3[f,g](x).
\end{align*}
The arguments for $\Pi_1$ and $\Pi_2$ are identical under the apparent symmetry, so it suffices to consider $\Pi_1$ and $\Pi_3$.  For $\Pi_1$, we can write
\[
\Pi_1[f,g](x)= \int_{\br^{2n}} \left\{\sum_{j\in \bz} \vp (2^{-j}\xi)\Phi (2^{-j+2} \eta) \f{|\xi+\eta|^s}{|\xi|^s}\right\}  \wh{D^s f}(\xi) \wh{g}(\eta) e^{2\pi i \lan{\xi + \eta, x}}\, d\xi\, d\eta.
\]
Since the expression in the bracket above is a bilinear Coifman-Meyer multiplier, the $\Pi_1[f,g]$ satisfies the inequality~\eqref{eq:kp}.

For $\Pi_3[f,g]$, note that the summation in $j$ is finite, thus it suffices to show estimate~\eqref{eq:kp} for the term
\begin{equation}\label{eq:pi3}
\n{\sum_{k\in\bz} \int_{\br^{2n}} |\xi+\eta|^s  \vp (2^{-k}\xi) \wh{f}(\xi) \vp (2^{-k} \eta) \wh{g}(\eta) e^{2\pi i \lan{\xi + \eta, x}}\, d\xi\, d\eta}{L^r(\rn)}.
\end{equation}
When $s\in 2\bn$, \eqref{eq:pi3} can be written as 
\[
\n{\int_{\br^{2n}} \left\{ \sum_{k\in \bz} \frac{|\xi+\eta|^s}{|\eta|^s} \vp(2^{-k}\xi) \vp(2^{-k} \eta) \right\} \wh{f}(\xi) \wh{D^s g}(\eta) e^{2\pi i \lan{\xi+\eta,x}}\, d\xi \, d\eta }{L^r(\rn)}.
\]
The expression in the bracket above belongs to Coifman-Meyer class, so the theorem follows directly in this case.   When $s\not\in 2\bn$, the symbol at hand is rougher than what is permitted from the current multilinear Fourier multiplier theorem.  At the remark at the end of this section we explain why  this symbol cannot be treated by known multiplier theorems. 

We proceed with the estimate for $\Pi_3$, which requires a more careful analysis. We have the following cases.  

\textbf{Case 1:} $\f{1}{2} < r <\infty$, $1<p,q<\infty$ or $\f{1}{2}\leq r < 1$, $1\leq p,q <\infty$.

These represent two separate cases: the former has the strong $L^r$~norm on the left hand side of \eqref{eq:kp}, and the latter has the weak $L^r$~norm instead.  However, in view of Theorem~\ref{th:CM} and Corollary~\ref{cor:m}, the strategy for the proof will be identical.  Thus we will only prove the estimate with a strong $L^r$~norm on the left hand side. Notice that when $|\xi|, |\eta|\le 2\cdot 2^k$, then $|\xi+\eta|\le  2^{k+2}$ and thus 
$\Phi (2^{-k-2}(\xi+\eta))=1$. In view of this have 
\begin{align*}
& \Pi_3 [f,g] (x)  \\
	&= \iint_{\br^{2n}} \sum_{k\in \bz} |\xi+ \eta|^s \vp(2^{-k} \xi) \wh{f} (\xi) \vp(2^{-k} \eta) \wh{g}(\eta) 
e^{2\pi i\lan{ \xi+\eta, x}}\, d\xi\,d\eta\\
	&= \iint_{\br^{2n}} \sum_{k\in \bz} |\xi+ \eta|^s \Phi (2^{-k-2}(\xi+\eta)) \vp(2^{-k} \xi) \wh{f} (\xi) \vp(2^{-k} \eta) \wh{g}(\eta) e^{2\pi i\lan{ \xi+\eta, x}}\, d\xi\,d\eta\\
	& = 2^{2s} \sum_{k\in \bz} \iint_{\br^{2n}}  \Phi_s (2^{-k-2}(\xi+\eta)) \vp(2^{-k} \xi) \wh{f} (\xi) \wtv(2^{-k} \eta)  \wh{D^s g}(\eta) e^{2\pi i\lan{ \xi+\eta, x}}\, d\xi\,d\eta \\
	&= 2^{2s}\sum_{k\in \bz} 2^{2nk} \iint_{\br^{2n}} \Phi_s (2^{-2}(\xi+\eta) ) \vp(\xi)  \wh{f}(2^k\xi) \wt{\vp}(\eta) \wh{D^s g}(2^k \eta) e^{2\pi i2^k\lan{ \xi+\eta, x}} \, d\xi\, d\eta , 
\end{align*}
where $\wtv(\cdot) := |\cdot|^{-s} \vp(\cdot)$, $\Phi_s (\cdot) := |\cdot|^s \Phi(\cdot)$.   Now the function  $\Phi_s (2^{-2}\cdot )$ is supported in $[-8,8]^n$ and can be expressed in terms of its Fourier series multiplied by the characteristic function of the set $[-8,8]^n$, denoted $\chi_{[-8,8]^n}$. 
\[
\Phi_s (2^{-2}(\xi+\eta) ) = \sum_{m\in \bz^n} c^s_{m} e^{\frac{2\pi i}{16} \lan{ \xi+\eta, m}} \chi_{[-8,8]^n} (\xi+\eta),
\]
 where $c^s_m := \f{1}{16^n}\int_{[-8,8]^n} |y|^s \Phi(2^{-2}y) e^{-\f{2\pi i}{16} \lan{y, m}}\, dy$.  Due to the support of $\vp$ and $\wtv$, we also have
\[
\chi_{[-8,8]^n} (\xi+\eta) \vp(\xi) \wtv(\eta) = \vp(\xi) \wtv(\eta),
\]
so that the characteristic function may be omitted from the integrand.  Using this identity, we   write $\Pi_3 [f,g] (x)$ as 
\begin{align*}
&= 2^{2s}\sum_{k\in \bz} 2^{2nk} \iint_{\br^{2n}} \sum_{m\in \bz^n} c^s_{m} e^{\f{2\pi i}{16} \lan{ \xi + \eta, m}} \vp(\xi) \wh{f}(2^k\xi) \wt{\vp}(\eta) \wh{D^s g}(2^k \eta) e^{2\pi i 2^k\lan{ \xi+\eta, x}} \, d\xi\, d\eta\\
&= 2^{2s}\sum_{m\in \bz^n} c^s_{m} \sum_{k\in \bz} [\De^m_k f](x)  [\wt{\De_k^{m}}  D^s g](x),
\end{align*}
where  $\De^m_k $ is the Littlewood-Paley operator given by multiplication on the Fourier transform side by $e^{2\pi i \langle{ 2^{-k} \cdot, \f{m}{16} }\rangle } \vp(2^{-k}\cdot)$, while $\wt{\De_k^{m}}$ 
is the Littlewood-Paley operator given by multiplication on the Fourier   side by 
$e^{2\pi i  \langle{ 2^{-k} \cdot, \f{m}{16}}\rangle }\wtv (2^{-k}\cdot)$.  Both 
Littlewood-Paley operators have the form: 
\[
 \int_{\rn} 2^{nk}  \Theta (2^k (x-y) +  \tfrac1{16} m) f(y)\, dy 
\]
for some Schwartz function $\Theta$ whose Fourier transform is supported in some annulus centered at zero. 

Let $r_* := \min (r,1)$.  Taking the $L^r$ norm of the right hand side above, we obtain 
\begin{align*}
&\n{D^s [f g]}{L^r}^{r_*} \\
&\leq \sum_{m\in \bz^n} |c^s_m|^{r_*} \n{  \sum_{k\in \bz} [\De_k^m f](x)  [\wt{\De^m_k} D^s g](x)}{L^r(\rn)}^{r_*}\\
&\leq \sum_{m\in \bz^n} |c^s_m|^{r_*}  \n{\sqrt{\sum_{k \in \bz} |\De_k^m f|^2}}{L^p(\rn)}^{r_*} \n{\sqrt{\sum_{k\in \bz} |\wt{\De_k^m} D^s g|^s}}{L^q(\rn)}^{r_*}
\end{align*}
whenever $\f{1}{p} + \f{1}{q} = \f{1}{r}$.  By Corollary~\ref{cor:m}, the preceding expression 
 is bounded by  a constant multiple of 
\[
 \sum_{m\in \bz^n} 
|c^s_{m}|^{r_*} [ \ln (2+ |m|) ]^{2r_*}\n{f}{L^p}^{r_*} \n{D^s g}{L^q}^{r_*}
\]
if $1<p,q<\infty$ and this term yields a constant, provided we can show that the series above converges. 

Now, applying Lemma~\ref{le:homKP},
\begin{align*}
{
c^s_m  = \int_{[-8,8]^n} |\xi|^s\Phi(2^{-2}\xi)e^{-\f{2\pi i}{16} \lan{m, \xi}}\, d\xi   = c\, [D^s \wh{\Phi(2^{-2}\cdot)}] ( \tfrac{m}{16}) = O((1+|m|)^{-n-s})  }
\end{align*} 
as $|m|\to \infty$ and $c^s_m$ is uniformly bounded for all $m\in \bz$.  Thus, since $r_*(n+s)>n$, the series  $\sum_{m\in \bz^n} |c_m^s|^{r_*}  [\ln (1+ |m|)]^{2r_*}$ converges.  This concludes Case 1.

\textbf{Case 2:} $1<r<\infty$, $(p, q) \in \{(r,\infty), (\infty,r)\}$

 Here we provide a proof, which is an adaptation of the proof given in \cite[Section 6.2]{BB}.  This method will extend more readily to the multi-parameter case, which is presented in Section~\ref{sec:multiKP}.  Write
\[
\n{\Pi_3[f,g]}{L^r(\rn)} \leq C(r,n) \n{\left(\sum_{j\in\bz} |\De_j \Pi_3[f,g]|^2\right)^{\f{1}{2}} }{L^r(\rn)}.
\]
The summand in $j$ above can be estimated as follows
\begin{align*}
&\De_j \Pi_3[f,g] (x)\\
&= \int_{\br^{2n}} |\xi+\eta|^s \Psi (2^{-j} (\xi+\eta)) \sum_{k\geq j-2}\Psi(2^{-k} \xi) \wh{f}(\xi) \Psi(2^{-k} \eta) \wh{g}(\eta)\, e^{2\pi i \lan{  \xi+\eta, x}}\, d\xi\, d\eta\\
	&= \int_{\br^{2n}}2^{js} \wt{\Psi_s} (2^{-j} (\xi+\eta))  \Psi(2^{-k} \xi) \wh{f}(\xi) \sum_{k\geq j-2} 2^{-ks} \wt{\Psi_{-s}}(2^{-k} \eta) \wh{D^s g}(\eta)\, e^{2\pi i \lan{  \xi+\eta, x}}\, d\xi\, d\eta\\
	&= 2^{js} \sum_{k\geq j-2}2^{-ks} \wt{\De_j^s} \left[   [\De_k f] [\wt{\De_k^{-s}} D^s g]\right](x)\\
	&\leq 2^{js} \left(\sum_{k\geq j-2} 2^{-2ks}\right)^{\f{1}{2}} \left( \sum_{k\geq j-2} \abs{\wt{\De_j^{s}} \left[ [\De_k f] [\wt{\De_k^{-s}} D^s g]\right](x)}^2\right)^{\f{1}{2}}\\
	&\leq 
	C(s)\left( \sum_{k\geq j-2} \abs{\wt{\De_j^{s}} \left[ [\De_k f] [\wt{\De_k^{-s}} D^s g]\right](x)}^2\right)^{\f{1}{2}}\, , 
\end{align*}
where $\wt{\Psi_{s}}(\cdot) := |\cdot|^s \Psi(\cdot)$ and $\cf [\wt{\De_k^{s}} f](\cdot) := \wt{\Psi_s}(2^{-k}\cdot) \wh{f}(\cdot)$. Thus we   have
\begin{align*}
\n{\Pi_3[f,g]}{L^r} &\leq C(r,n,s)
\n{ \left(\sum_{j\in \bz} \sum_{k\in \bz} \left|\wt{\De_j^s}[ \De_k f  \, \wt{\De_k^{-s}} D^s g] \right|^2\right)^{\f{1}{2}}}{L^r}.
\end{align*}
We apply \cite[Proposition 4.6.4]{G1} to extend $\{\wt{\De_k^{s}}\}_{k\in \bz}$ from $L^r \to L^r \ell^2$  to $L^r \ell^2 \to L^r \ell^2 \ell^2$ for $1<r<\infty$.  This gives
\begin{align*}
\n{\Pi_3[f,g]}{L^r}	&\leq C(r,n,s) \n{\left(\sum_{k\in \bz} \left|\De_k f  \, \wt{\De_k} D^s g \right|^2\right)^{\f{1}{2}}}{L^r}\\
	&\leq C(r,n,s)\n{\sup_{k\in \bz} \wt{\De_k} D^s g}{L^{\infty}} \n{\left(\sum_{k\in \bz} \left|\De_k f \right|^2\right)^{\f{1}{2}}}{L^r}\\
&\leq C(r,n,s) \sup_{k\in \bz} \n{\wt{\De_k} D^s g}{L^{\infty}} \n{f}{L^r} \\
&\leq C(r,n,s)   \Big\| \wh{\wt{\Psi_{-s}}}\Big\|_{L^1} \n{D^s g}{L^{\infty}}\n{f}{L^r}.
\end{align*}
This proves the case when $(p,q) = (r,\infty)$and the case $(p,q) = (\infty,r)$ follows by symmetry.
\end{proof}

We emphasize that the bound for $\Pi_3$ does not follow from presently-known multiplier estimates. In the following remark, we consider the symbol of $\Pi_3$ under the smoothness criteria given in \cite{BNT, CM, GS, Tomita}.  Define  
\[
\si_s(\xi,\eta) := \sum_{k\in \bz} \f{|\xi+\eta|^s}{|\xi|^s} \vp(2^{-k}\xi) \vp(2^{-k} \eta).
\]
Then $\Pi_3[f,g] = T_{\si_s} [f, D^s g]$, where $T_{\si_s}$ is the pseudo-differential operator with symbol $\si_s$.  We observe the following.

\begin{remark}
For $s\in \br_+ \setminus 2\bn$, 
\begin{enumerate}
\item $\si_s$ does not belong to the Coifman-Meyer class (Theorem~\ref{th:CM}) for $s<2n$.

\item $\si_s \in \dot{BS}^{0}_{1,0; -\pi/4}$ defined below,  which forms a degenerate class of pseudo-differential operators, \cite{BNT}.

\item  $\si_s$ fail the smoothness conditions in \cite{GS} if $s < n/2$.
\end{enumerate}
\end{remark}

\begin{proof}[Sketch of proof of Remark 1]
Firstly, it is easily seen that $\p^{\al} \si_s$ develops a singularity on the hyperplane $\xi+\eta = 0$ whenever $|\al| > s$, so the bound
$|\p_{\xi_1}^{2n+1} \si_s|(\xi,\eta) \leq C (|\xi|+ |\eta|)^{-2n-1}$ from Theorem~\ref{th:CM} is not satisfied when $s<2n$ for any $\xi,\eta \in \rn\setminus \{0\}$ with $\xi  +\eta =0$. 

Secondly, we recall the class of bilinear symbols denoted $BS^0_{0,1; \theta}$ for $\theta \in (-\pi/2,\pi/2]$ given in \cite{BNT}: $\si(\xi,\eta)\in BS^0_{1,0; \theta}$ if
\[
|\p_{\xi}^{\al} \p_{\eta}^{\be} \si| (\xi,\eta) \leq C_{\al,\be} (1+ |\eta - \xi \tan \theta|)^{-|\al|-|\be|};
\]
and $\si \in \dot{BS}^{0}_{1,0;\theta}$ if RHS above can be replaced by $C_{\al,\be} |\eta - \xi \tan \theta|^{-|\al|-|\be|}$.  In \cite{BNT}, B\'enyi, Nahmod, Torres remarked that  the classes of symbols $BS^0_{1,0; \theta}$ and $\dot{BS}^0_{1,0;\theta}$ given by the angle $\theta = -\pi/4$ (along with $\theta = 0$ and $\theta = \pi/2$) are degenerate in the sense that the boundedness results for singular multipliers do not apply anymore, even for $r>1$.  As we noted above, $\si_s$ develops a singularity along $\xi+ \eta = 0$, which is permitted only in the class $\dot{BS}^{0}_{1,0; -\pi/4}$.  So we can easily note that $\si_s \in \dot{BS}^{0}_{0,1;\theta} \iff \theta = -\pi/4$.  This portrays that $\si_s$ does not belong to a known bounded class of singular multiplier symbols.

Lastly, we show that $\si_s$ fails to satisfy the condition given in \cite{Tomita} and also in  \cite{GS}   when $s < n$.  Let $\Psi \in \cs(\br^{2n})$ be defined such that $\Psi$ is supported on an annulus $1/2\le |(\xi,\eta)| \le 2$ and $\sum_{j\in \bz} \Psi(2^{-j} \xi, 2^{-j} \eta) = 1$ except at the origin.  The condition in \cite{Tomita} and \cite{GS} requires that for some $\ga>n$,
\[
\sup_{j\in \bz} \n{\Psi(\cdot) \si_s (2^{j} \,\cdot)}{L^2_{\ga}(\br^{2n})} < \infty.
\]
We will show that right hand side of above is infinite when $s\in \br_+ \setminus 2\bz_{+}$, $s<n/2$ and $\ga = n$.
  
Note that $\si_s( 2^j \xi, 2^j \eta) =\si_s (\xi,\eta)$ for any $j\in \bz$ and also that multiplying $\Psi$ to $\si_s$ reduces the summation in $k$ to a finite sum $\sum_{|k|\leq 1}$.  Thus, 
\[
\Psi(\xi,\eta) \si_s(2^j \xi,2^j \eta) = |\xi+\eta|^s \wt{\Psi}(\xi,\eta)
\]
where 
\[
\wh{\Psi}(\xi,\eta) := \Psi(\xi,\eta) \sum_{|k|\le 1} |\xi|^{-s} \vp(2^{-k}\xi) \vp(2^{-k} \eta).  
\]
Note that for any $s\in \br\setminus \{0\}$,
\[
\De_{\xi} |\xi + \eta|^s = s(s+n-2) |\xi+\eta|^{s-2}\; \qq \left|\na_{\xi} |\xi+\eta|^s\right| = s |\xi+\eta|^{s-1}.
\]
For the derivatives on $|\xi+\eta|^s\wt{\Psi}(\xi,\eta)$, the singularity along the hyperplane $\{\xi +\eta =0\}$ is dominated by $\p^{\al} |\xi+\eta|^s$ when $|\al|>s$.  For our purpose, we can assume (near this hyperplane) that all derivatives fall on the rough term $|\xi+\eta|^s$.

First, let $n = 2k$ for some $k\in \bn$.  To see that $|\xi+\eta|^s \wt{\Psi}(\xi,\eta) \notin L^2_n(\br^{2n})$, there is some $C(s,n)\neq 0$ such that
\[
[\De_{\xi}^{k} |\xi+\eta|^s] \wt{\Psi}(\xi,\eta) = C(s,n) |\xi+\eta|^{s-n} \wt{\Psi}(\xi,\eta).
\]
Also note that $\wt{\Psi}$ is bounded below by some constant within the set 
\[
\left\{ (\xi,\eta)\in \br^{2n}: \, |\xi+\eta| \leq \f{1}{10},  \f{3}{4} \le |\eta|\le \f{3}{2} \right\}
\]
Thus, 
\begin{align*}
\n{\Phi \si_s}{L^2_n} &\geq C(n) \int_{\br^{2n}} \left|\De_{\xi}^{k} \left[|\xi+\eta|^s\right] \wt{\Psi}(\xi,\eta) \right|^2\, d\xi\, d\eta\\
 &\geq C(s,n) \int_{\eta: \f{3}{4} \leq |\eta|\leq \f{3}{2}} \int_{\xi: |\xi+\eta|\leq \f{1}{10}} |\xi+\eta|^{2s-2n}\, d\xi\, d\eta\\
	&= C(s,n) \int_{|\xi|\leq \f{1}{10}} |\xi|^{2s-2n}\, d\xi
\end{align*}
which is infinite when $2s-2n < -n$ (i.e. $s<n/2$).  

If $n= 2k+1$ for some $k\in \bn$, then we can make the same argument as above after replacing $\De_{\xi}^{k}$ by $\left| \na_{\xi} [ \De_{\xi}^k |\xi+\eta|^s]\right|$.  
\end{proof}

\section{Inhomogeneous Kato-Ponce inequality}\label{sec:inhKP}
In this section, we   discuss the original Kato-Ponce inequality  \eqref{eq:KatoPonce}.  Our approach is largely based on the idea developed in the previous section for the homogeneous symbol and does not depend on the smoothness of the symbol $(1+|\cdot|^2)^{s/2}$.  It is indeed surprising that not only the positive results, but also the negative results parallel the ones given for the homogeneous symbol.  This phenomenon is sharply observed in the following lemma.

\begin{lemma}\label{le:inhom}
Let $f\in \cs(\rn)$ and $s>0$.  Then for all $\de \in (0,1]$, there exists a constant $C(n,s,f)$ independent of $\de$ such that
\begin{equation}\label{eq:Bessel}
|(\de^2-\De)^{s/2} f| (x) \leq C(n,s,f) (1+|x|)^{-n-s}
\end{equation}
\end{lemma}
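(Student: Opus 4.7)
The plan is to mirror the proof of Lemma~\ref{le:homKP}, replacing the homogeneous analytic family $\{u_z\}$ by an analogue built from the inhomogeneous symbol $(\de^2+|\xi|^2)^{z/2}$ and exploiting the explicit Bessel-kernel representation of its inverse Fourier transform.

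For $|x| \leq 1$ the bound is immediate: since $\de \in (0,1]$, we have $(\de^2+|\xi|^2)^{s/2} \le (1+|\xi|^2)^{s/2}$, and thus
\[
|(\de^2-\De)^{s/2} f(x)| \le \int_\rn (1+|\xi|^2)^{s/2}|\wh f(\xi)|\,d\xi =: C_0(n,s,f) < \infty,
\]
uniformly in $\de \in (0,1]$.

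For $|x|>1$, I would work with the tempered distribution $K_s^\de := \cf^{-1}[(\de^2+|\cdot|^2)^{s/2}]$. Away from the origin, $K_s^\de$ coincides with a smooth function: for $-n<\mathrm{Re}\,z<0$ the classical Bessel-type kernel identity gives
\[
\cf^{-1}[(\de^2+|\xi|^2)^{z/2}](y) = c_{n,z}\,\de^{(n+z)/2}|y|^{-(n+z)/2}K_{(n+z)/2}(\de|y|),
\]
with $K_\nu$ the modified Bessel function of the second kind, and this extends in $z$ by analytic continuation to $z = s > 0$ away from $y=0$. Combining the small-argument asymptotic $K_\nu(t)\sim c\,t^{-\nu}$ as $t\to 0^+$ with the exponential decay $K_\nu(t)\sim c\,t^{-1/2}e^{-t}$ as $t\to\infty$, a short case analysis splitting on $\de|y|\lesssim 1$ versus $\de|y|\gtrsim 1$ yields the uniform pointwise bound
\[
|K_s^\de(y)| \le C(n,s)\,|y|^{-n-s}\qquad\text{for all } y\neq 0, \ \de\in(0,1].
\]

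With this kernel bound in hand, the identity $(\de^2-\De)^{s/2}f(x) = \langle K_s^\de, f(x+\cdot)\rangle$, interpreted with the Taylor-subtraction regularization near $y=0$ exactly as in Lemma~\ref{le:homKP}, can be split into $|y|\le 1$ and $|y|>1$. The first region, after Taylor subtraction, contributes a term bounded by a finite sum of derivatives of $f$ evaluated at $x$, which has Schwartz decay in $x$ and is therefore $O((1+|x|)^{-n-s})$. The second region is handled by the same two sub-splits used in the proof of Lemma~\ref{le:homKP}: on $|y|\le|x|/2$, the Schwartz decay of $f(x+y)$ at $|x+y|\ge|x|/2$ gives arbitrary-rate decay in $x$; on $|y|>|x|/2$, the estimate $|y|^{-n-s}\le C|x|^{-n-s}$ together with the $L^1$-integrability of $f$ yields the desired bound.

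The main obstacle is establishing the uniform-in-$\de$ kernel bound. As $\de\to 0$, the exponential regime of $K_s^\de$ retreats to infinity while the near-origin profile $|y|^{-n-s}$ strengthens, so one must verify that the transition regime $\de|y|\sim 1$ does not introduce a $\de$-dependent loss. This is resolved by the standard asymptotic expansions of $K_\nu$, which show that the ratio $K_s^\de(y)/|y|^{-n-s}$ stays bounded uniformly in $\de\in(0,1]$ on the entire range $y\neq 0$; the exponential regime only strengthens the estimate, so the Riesz-like regime and the transition give the essential bound.
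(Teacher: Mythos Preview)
Your approach is essentially the paper's: establish the uniform pointwise kernel bound $|K_s^\delta(y)|\le C(n,s)\,|y|^{-n-s}$ (the paper uses the heat-kernel integral representation $K_z^\delta(y)=\int_0^\infty e^{-|y|^2/t}e^{-\delta^2 t}t^{-(z+n)/2-1}\,dt$ rather than the modified-Bessel form, but these are equivalent and yield the same bound by the same case split on $\delta|y|$), regularize the distributional pairing by Taylor subtraction at the origin, and then split into near and far pieces exactly as you describe.

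There is one point you pass over too quickly. The Taylor-subtraction regularization produces, in addition to the convergent remainder integral, \emph{point-mass terms} of the form $c_\alpha^\delta\,\partial^\alpha f(x)$ for $|\alpha|<N$; your sentence about the $|y|\le 1$ region being ``bounded by a finite sum of derivatives of $f$ evaluated at $x$'' either omits these or silently assumes their coefficients $c_\alpha^\delta$ are bounded uniformly in $\delta$. This uniformity is not automatic and is the heart of the $\delta$-dependence. The paper computes these constants explicitly (equation~\eqref{eq:I2}) by integrating $y^\alpha$ against the full-space kernel via the Gamma-function identity, obtaining $c_\alpha^\delta = c(n,\alpha,s)\,\delta^{s-|\alpha|}$; choosing $N$ with $N-1\le s<N$ then forces $|\alpha|\le N-1\le s$, hence $\delta^{s-|\alpha|}\le 1$ on $(0,1]$. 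Without this scaling computation (or an equivalent argument from the Bessel-kernel side, e.g.\ via the dilation $K_s^\delta(y)=\delta^{n+s}K_s^1(\delta y)$), the $\delta$-uniformity of the local contribution is unproven. A related subtlety: because the paper subtracts the Taylor polynomial over all of $\mathbb{R}^n$, it must separately handle the tail moments $\int_{|y|\ge 1}K_s^\delta(y)\,y^\alpha\,dy$ at the borderline $s=N-1\in 2\mathbb{N}+1$ by appealing to the radial symmetry of the kernel; your localized subtraction on $|y|\le 1$ sidesteps this wrinkle, but the point-mass scaling must still be verified.
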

\textbf{Remark:}  In \cite{CZ}, the authors use scaling of \eqref{eq:KatoPonce} and dominated convergence to deduce \eqref{eq:kp} from \eqref{eq:KatoPonce}.  This lemma gives a justification for this argument as well.  In fact, these two inequalities are intricately related due to the heuristic relationship  
$$
J^s := (1-\De)^{s/2} \approx 1 + (-\De)^{s/2} =: 1+D^s.
$$
  This approximation is rigorous when working in $L^p$ for $1 \le p<\infty$, so that \eqref{eq:kp} may imply \eqref{eq:KatoPonce} as well in this case.

\begin{proof}
First, note that $[(\de^2 -\De)^{s/2} f]$ is uniformly bounded in $\de \in (0,1]$ since
\[
\left|\int_{\rn} (\de^2 + |\xi|^2)^{\f{s}{2}} \wh{f}(\xi)e^{2\pi i\lan{\xi, x}}\, d\xi\right| \leq \int_{\rn} (1+|\xi|)^{\f{s}{2}} |\wh{f}|(\xi)\, d\xi <\infty.
\]

It remains to show that for $|x|\geq 1$, $|(\de^2 -\De)^{s/2} f| (x)\leq C(n,s,f) |x|^{-n-s}$.

For $z \in \ccc$ and $\de \in (0,1]$, define the distribution $v_z^{\de}$ by the action
\[
\lan{v_z^{\de}, f} = \int_{\rn} (\de^2 + |\xi|^2)^{\f{z}{2}} f(\xi) \, d\xi
\]
for $f\in \cs(\rn)$.  Note that the map $z\mapsto \lan{v_z^{\de}, f}$ defines an entire function.  If $z\in \br$ and $z <0$ and $\de =1$, $\wh{v_z^{\de}}$ is known as the Bessel potential, denoted $G_{-z}$, given in \cite[Chapter 6]{G1}.  We now extend the distribution $\wh{v^{\de}_z}$ to $z\in \ccc$.

Begin with the Gamma function identity: for $A>0$ and $z:\,\RE z<0$
\[
A^z = \f{1}{\Ga (-z)} \int_0^{\infty} e^{tA} t^{-z -1}\, dt.
\]

Consider the map $z\mapsto \lan{v_z^{\de},\wh{f}}$ when $z:\, \RE  z <0$.  Using the identity above, we have
\begin{align*}
\lan{v_z^{\de}, \wh{f}} : &= \int_{\rn} \f{1}{\Ga \left(-\f{z}{2}\right)} \int_0^{\infty} e^{-\de^2 t} e^{-t|\xi|^2} t^{-\f{z}{2} -1}\, dt \wh{f}(\xi) \,  d\xi\\
&= \f{1}{\Ga \left(-\f{z}{2}\right)} \int_{\rn} \int_0^{\infty} e^{-\f{|y|^2}{t}}  f(y) e^{-\de^2 t} t^{-\f{z+n}{2}}\, \f{dt}{t}\, dy.
\end{align*}
Denote $K_z^{\de}(y) :=  \int_0^{\infty} e^{-\f{|y|^2}{t}} e^{-\de^2 t} t^{-\f{z+n}{2}}\, \f{dt}{t}$ to be the kernel above.  First we observe,
\[
|K^{\de}_z| (y) = |y|^{-\RE z-n}\left|\int_0^{\infty} e^{-\f{1}{t}}  e^{-\de^2 |y|^2 t} t^{-\f{z+n}{2}}\, \f{dt}{t}\right| \leq C(z,n) |y|^{-\RE z-n}
\]
for all $y\in \rn\setminus \{0\}$ when $\RE\, z>-n$.  Recall from \cite[Proposition 6.1.5]{G1} that $|K^{\de}_z|(y) \sim \ln |y|^{-1}$ when  $\RE\, z = -n$; and $|K^{\de}_z|(y) \sim 1$ when $\RE\, z < -n$.  

Given $\de>0$, the kernel also satisfies a better asymptotic estimate for sufficiently large $|y|$.  For $|y| >\de^{-1}$ and $\de \leq 1$, note that $\de^2 t + \f{|y|^2}{t} \geq \max \left(\de^{2} t + \f{1}{\de^2 t}, 2\de |y|\right)$.  Thus 
\begin{align*}
|K_z^{\de}|(y) &\leq e^{-\de |y|} \int_0^{\infty} e^{- \f{1}{2}\de^2 t +\f{1}{2\de^{2} t}} t^{-\f{\RE z+n}{2}}\, \f{dt}{t}
	&= \de^{\RE z+n}  e^{-\de |y|} \int_0^{\infty} e^{-\f{1}{2}(t+ \f{1}{t})} t^{-\f{\RE z +n}{2}}\, \f{dt}{t}.
\end{align*}
Since the integral above converges for any $z\in \ccc$, we have that for all $|y|> \de^{-1}$, $|K_z(y)| \leq C(z,n) \de^{\RE z +n} e^{-\de |y|}$.  We remark that $\sup_{\de \in (0,1]} \de^{\RE z +n} e^{-\de |y|} = c\,|y|^{-n-\RE z}$.

Note that $K_z(y)$ is not locally integrable when $\RE z \geq 0$ so that it is not well-defined as a tempered distribution.  However, since $z\mapsto \lan{v_z^{\de},\wh{f}}$ is an entire function, it suffices to find a holomorphic extension of $\lan{\wh{v_z^{\de}}, f}$ which is defined as $\lan{K_z^{\de}, f}$ for $\RE z <0$.  We continue
\begin{align}\label{eq:I1}
\lan{\wh{v_z^{\de}},f} &=  \f{1}{\Ga \left(-\f{z}{2}\right)} \int_{\rn}K^{\de}_z(y)\, \left[f(y) - \sum_{|\al| < N} \f{[\p^{\al} f](0)}{\al !} y^{\al}\right]\, dy\\
	&\q+ \sum_{|\al| < N}\f{ [\p^{\al} f](0)}{\al !}\f{1}{\Ga \left(-\f{z}{2}\right)} \int_{\rn}   y^{\al}  \int_0^{\infty}e^{- \f{|y|^2}{t}} e^{-\de^2 t} t^{-\f{z+n}{2}}\, \f{dt}{t}\, dy \notag\\
	&=: I_1^{\de} (z) + I_2^{\de} (z).\notag
\end{align}

Consider first $I_2^{\de}$.
\begin{align}\label{eq:I2}
I_2^{\de} (z) &= \sum_{|\al| < N} \f{[\p^{\al} f](0)}{\al !}\f{1}{\Ga \left(-\f{z}{2}\right)} \int_{\rn}   y^{\al}  \int_0^{\infty} e^{- \f{|y|^2}{t}}e^{-\de^2 t} t^{-\f{z+n}{2}}\, \f{dt}{t}\, dy\notag\\
	&= \sum_{|\al| < N} \f{[\p^{\al} f](0)}{\al !}\f{1}{\Ga \left(-\f{z}{2}\right)}  \int_0^{\infty} e^{-\de^2 t} t^{\f{|\al| - z}{2}}\, \f{dt}{t}\,\int_{\rn}   y^{\al} e^{-|y|^2}\, dy\notag\\
 	&= \sum_{|\al| < N} \f{\de^{z-|\al|}}{\al !} [\p^{\al} f](0)\f{1}{\Ga \left(-\f{z}{2}\right)} \int_0^{\infty} e^{- t} t^{\f{|\al| - z}{2}}\, \f{dt}{t} \, \int_{\rn} y^{\al} e^{- |y|^2} \, dy \notag\\
	&=\sum_{|\al| < N}  \f{\de^{z-|\al|}}{\al !}[\p^{\al} f](0) \f{\Ga \left(\f{|\al| -z}{2}\right)}{\Ga \left(-\f{z}{2}\right)} \,\int_0^{\infty} r^{n + |\al|-1}  e^{-r^2} \,dr  \,\int_{S^{n-1}} \theta^{\al} \, d\theta.
\end{align}
Note that the integral $\int_{S^{n-1}} \theta^{\al} \, d\theta$ vanishes unless $\al_j$ is even for all $j=1,2,\dots, n$ where $\al = (\al_1,\al_2, \dots, \al_n)$.  In this case, $|\al|$ is even.  Therefore, the poles of the function 
$\Ga((|\al|-z)/2)$ cancel with the poles of $\Ga (-z/2)$, and  thus $I_2^{\de} (z)$ is entire.  

We should remember that when $z$ is a positive even integer, the poles of $\Ga (-\cdot/2)$ make  the term $I^{\de}_1 (z)$ vanish identically.  Thus, as expected in this case, $\wh{v_z^{\de}}$ yields a local differential operators composed of only even-order derivatives.

Next we turn our attention to $I_1^{\de}(z)$. The expression inside the square bracket on the right hand side of \eqref{eq:I1} is locally $O(|y|^N)$.  Since $K^{\de}_z(y) = O(|y|^{-n -\RE z})$, the integral converges locally (say $|y|\leq \de^{-1}$) if $\RE z <N$.  For $|y| > \de^{-1}$, the kernel $K^{\de}_z (y)$ decays exponentially, while the expression inside the square bracket grows at most like $O(|y|^{N-1})$.  Thus, the integral converges.

Therefore, we note that \eqref{eq:I1} and \eqref{eq:I2} extends the function $z\mapsto \lan{K_z, f}$ holomorphically on the half plane $z: \, \RE z <N$.  Thus, this defines the tempered distribution $\wh{v^{\de}_z}$.

Now consider $f^s_{\de} := (\de^2 - \De)^{s/2} f$ for $s>0$ and $\de>0$.  We can write $f^s_{\de}(x) = \lan{v^{\de}_s, \wh{f} e^{i\lan{\cdot,x}}} = \lan{\wh{v^{\de}_s}, f(\cdot + x)}$.  Let $s\in [N-1, N)$ for some $N\in \bn$.  Using the formula \eqref{eq:I1} and \eqref{eq:I2}, $\lan{\wh{v^{\de}_s}, f(\cdot + x)}$ can be expressed as
\[ 
C(s) \int_{\rn}K^{\de}_s(y)\, \left[f(x+y) - \sum_{|\al| < N} \f{[\p^{\al} f](x)}{\al !} y^{\al}\right]\, dy +\sum_{|\al| < N} C(\al, n) \de^{s-|\al|}[\p^{\al} f](x)\, , 
\]
where the first constant $C(s) = 0$ when $s$ is a positive even integer.

The second term above is a Schwartz function, and decays uniformly in $\de\in (0,1]$ since $s-|\al|\geq 0$ when $|\al|\leq N-1$.

For the first term, we split the integral into two parts $\int_{|y|< 1}\cdot\, dy + \int_{|y|\geq 1}\cdot \, dy =: J_1(x) + J_2(x)$. We have that 
\begin{align*}
J_1(x) &= \int_{|y|<1}K^{\de}_s(y)\, \left[f(x+y) - \sum_{|\al| < N} [\p^{\al} f](x) y^{\al}\right]\, dy \\
	&\leq \sup_{|\be| = N} \sup_{|y'| <1} |\p^{\be} f|(x+ y') \,\int_{|y|< 1}|y|^{-n-s+N} \, dy.
\end{align*}
Since $-n-s+N >-n$, the last integral above is convergent.  Also, we note that the expression $\sup_{|\be| = N} \sup_{|y'| <1} |\p^{\be} f|(x+ y')$ decays like a Schwartz function.

The estimate for $J_2$ is more delicate.  We need to consider separately the case $s= N-1$ and $s\in (N-1,N)$.  First, consider when $s\in (N-1,N)$.  In this case,
\begin{align*}
J_2(x) &= \int_{ |y|\geq 1}K^{\de}_s(y)\, \left[f(x+y) - \sum_{|\al| < N} [\p^{\al} f](x) y^{\al}\right]\, dy \\	
&= \int_{|y|\geq 1}|y|^{-n-s} |f|(x+y)\, dy  + \sum_{|\al| \leq  N-1} |\p^{\al} f|(x) \int_{ |y|\geq 1}  |y|^{-n-s + |\al |}\, dy .
\end{align*}
For the first term
\begin{align*}
\int_{|y|\geq 1}|y|^{-n-s} |f|(x+y)\, dy &\leq |x|^{-M}\int_{2|y|\leq |x|} |x+y|^M |f|(x+y)\, dy\\
	&\q +C|x|^{-n-s} \int_{2|y|> |x|} |f|(x+y)\, dy .
\end{align*}
for any $M\in \bn$.  Thus this decays like $|x|^{-n-s}$.

For the second term, the integral $\int_{ |y|\geq 1}  |y|^{-n-s + |\al |}\, dy$ converges since $-n-s + |\al| < -n$ when $|\al| \leq N+1$ and $s\in (N-1, N)$, so that the second term decays like a Schwartz function. 

Now consider the special case when $s = N-1$.  Note that $s$ has to be an odd integer, since otherwise, the terms $J_1, J_2, J_3$ would not even appear due to the vanishing constant $C(s)$ mentioned above. Since $K_z^{\de}(y)$ is a radial function (and exponentially decaying), the integral
\[
\int_{|y|\geq 1} K^{\de}_z (y) y^{\al} \, dy = \int_1^{\infty} K^{\de}_z (r)\, r^{|\al|+ n-1} \, dr \, \int_{S^{n-1}} \theta^{\al}\, d\theta = 0
\]
since $|\al|$ is odd.  This concludes the proof of Lemma \ref{le:inhom}.
\end{proof}

\begin{proof}[Proof of Theorem~\ref{th:KP2} for the inhomogeneous case]
Fix an  index $\f{1}{2}<r<1$ and indices $1< p_1, p_2, q_1, q_2\leq \infty$ satisfying $\f{1}{p_1}+ \f{1}{q_1}= \f{1}{r}= \f{1}{p_2} + \f{1}{q_2}$; also fix $0<s\leq \f{n}{r}-n$.  

Assume    that \eqref{eq:KatoPonce} holds and we will reach a contradiction.  Scaling $x \mapsto \la x$ for some $\la>0$, this inequality is equivalent to
\begin{align}\begin{split}\label{eq:scaleKP}
&\n{(\la^{-2}-\De)^{s/2} [fg]}{L^r} \leq  \\
& \qq\qq C\left( \n{(\la^{-2}-\De)^{s/2} f}{L^{p_1}} \n{g}{L^{q_1}} +  \n{f}{L^{p_2}} \n{(\la^{-2}-\De)^{s/2}g}{L^{q_2}} \right).
\end{split}\end{align}

By Lemma~\ref{le:inhom}, we know that the functions $[(\la^{-2} - \De)^{s/2} f](x)$ and $[(\la^{-2}-\De)^{s/2} g](x)$ are pointwise dominated by a constant multiple of $(1+ |x|)^{-n-s}$ uniformly for $\la>1$.  Then the right hand side of \eqref{eq:scaleKP} is bounded uniformly in $\la> 1$.

On the other hand, $(\la^{-2} - \De)^{s/2} [fg] \to D^{s} [fg]$ pointwise everywhere by Lebesgue dominated convergence.  By Fatou's lemma, this implies that
\[
\int_{\rn} \left|D^{s} [fg]\right|^{r}\,dx \leq \liminf_{\la\to \infty} \int_{\rn} \left|(\la^{-2} -\De)^{s/2} [fg]\right|^r\, dx.
\]
Since $D^{s} [fg]\notin L^r(\rn)$ if $0<s\leq  n/r -n$, the left hand side of \eqref{eq:scaleKP} is infinite, which leads to a contradiction.

When $\f{n}{r}-n<s<0$, consider the counter-example given in Section~\ref{sec:homKP}.  The left hand side of \eqref{eq:KatoPonce} is independent of $k$, while the $\n{J^s f}{L^p}$ term on the right side can be written as 
\begin{align*}
[J^s f](x) &= \int_{\rn} (1+|\xi|^2)^{\f{s}{2}} \Phi (\xi - 2^k e_1) e^{2\pi i \xi \cdot x}\, d\xi \\
	&= \int_{\rn} 2^{ks} \Psi_s^k (\xi)\Phi(\xi - 2^k e_1)e^{2\pi i \xi \cdot x} \, d\xi
\end{align*}
where $\Psi_s^k(\cdot) := (2^{-2k} + |\xi|^2)^{\f{s}{2}} \Psi(\cdot)$.  Then
\[
\n{J^s f}{L^p} \leq 2^{ks} \big\|{\wh{\Psi_s^k}}\big\|_{L^1} \big\|{\wh{\Phi}}\big\|_{L^p}.
\]
The fact that $\big\| {\wh{\Psi_s^k}}\big\|_{L^1}$ is uniformly bounded is shown below in Lemma~\ref{le:decay} and the remark following.  Taking $k\to \infty$, we arrive at a contradiction.
\end{proof}

\begin{proof}[Proof of Theorem~\ref{th:KP1} for the inhomogeneous case]
We resume the notations $\Phi$, $\Psi$ introduced in Section~\ref{sec:homKP}.  Via similar computations, we split the estimate above into $\Pi_1, \Pi_2, \Pi_3$.  More precisely,
\begin{align*}
J^s[fg](x) &= \sum_{j\in \bz} \sum_{k: k <j-1}\int_{\br^{2n}} (1+|\xi+\eta|^2)^{\f{s}{2}}  \vp (2^{-j}\xi) \wh{f}(\xi) \vp (2^{-k} \eta) \wh{g}(\eta) e^{2\pi i \lan{\xi + \eta, x}}\, d\xi\, d\eta\\
	&\q+\sum_{k\in \bz} \sum_{j: j <k-1}\int_{\br^{2n}} (1+|\xi+\eta|^2)^{\f{s}{2}}  \vp (2^{-j}\xi) \wh{f}(\xi) \vp (2^{-k} \eta) \wh{g}(\eta) e^{2\pi i \lan{\xi + \eta, x}}\, d\xi\, d\eta\\
	&\q+ \sum_{k\in\bz}\sum_{j: |j-k|\leq 1} \int_{\br^{2n}} (1+|\xi+\eta|^2)^{\f{s}{2}}  \vp (2^{-j}\xi) \wh{f}(\xi) \vp (2^{-k} \eta) \wh{g}(\eta) e^{2\pi i \lan{\xi + \eta, x}}\, d\xi\, d\eta\\
	&=: \Pi_1[f,g](x) + \Pi_2[f,g](x) + \Pi_3[f,g](x).
\end{align*}

As described in Section~\ref{sec:homKP}, estimates for $\Pi_1$ and $\Pi_2$ follow from Theorem~\ref{th:CM}.  More specifically,
\[
\Pi_1[f,g](x)= \int_{\br^{2n}} \left\{\sum_{j\in \bz} \vp (2^{-j}\xi)\Phi (2^{-j+2} \eta) \f{(1+|\xi+\eta|^2)^{\f{s}{2}}}{(1+|\xi|^2)^{\f{s}{2}}}\right\} \wh{J^s f}(\xi) \wh{g}(\eta) e^{2\pi i \lan{\xi + \eta, x}}\, d\xi\, d\eta.
\]
where the symbol inside the bracket satisfies the Coifman-Meyer condition given in Theorem~\ref{th:CM}.

Thus it suffices to estimate $\Pi_3$. { For simplicity we only consider the term $j=k$.} We need to control the  following term:
\[
\sum_{k\in \bz} \int_{\rn} (1+|\xi+\eta|^2)^{\f{s}{2}} \vp(2^{-k} \xi)\wh{f}(\xi) \vp(2^{-k} \eta) \wh{g}(\eta) e^{2\pi i\lan{\xi+\eta, x}} \, d\xi\, d\eta
\]
The inhomogeneous estimate is slightly different because we cannot transfer the derivatives from the product to the high frequency term simply via scaling.  More specifically, recall that the key step in the homogeneous estimate was the identity
\[
|\xi+\eta|^s  = 2^{ks} |2^{-k} (\xi+\eta)|^s |2^{-k}\eta|^{-s} |2^{-k} \eta|^{s} = |2^{-k} (\xi+\eta)|^s |2^{-k}\eta|^{-s} |\eta|^{s}.
\]
When repeated for this setting, we obtain
\[
(1+|\xi+\eta|^2)^{\f{s}{2}} = (2^{-2k} + |2^k (\xi+\eta)|^2)^{\f{s}{2}} (2^{-2k} + |2^k \eta|^2)^{-\f{s}{2}}  (1+|\eta|^2)^{\f{s}{2}}.
\]
Using \eqref{eq:Bessel}, we can control these terms when $k\geq 0$, but the constant $2^{-2k}$ grows unboundedly when $k<0$.  Thus, we need to separate these cases.

On the other hand, when $k<0$, we note that the term $(1+|\eta|^2)^{-\f{s}{2}}$ remains bounded when $\eta \sim 2^k$.  This advantage will enable us to handle this case.

We split into the following cases as in Section~\ref{sec:homKP}.

\textbf{Case 1:} $\f{1}{2} < r <\infty$, $1<p,q<\infty$ or $\f{1}{2}\leq r < 1$, $1\leq p,q <\infty$.

As before, we will only show the estimate for the first case, whereas the estimates involving weak $L^r$ norms will immediately follow when the corresponding norms are replaced in the proof below.

First consider the sum when $k\geq 0$.
\begin{align*}
&\Pi_3^1[f,g] (x)  \\
: &= \sum_{k\geq 0} \int_{\rn} (1+|\xi+\eta|^2)^{\f{s}{2}} \vp(2^{-k} \xi)\wh{f}(\xi) \vp(2^{-k} \eta) \wh{g}(\eta) e^{2\pi i\lan{\xi+\eta, x}} \, d\xi\, d\eta\\
	&= \sum_{k\geq 0} \int_{\rn} 2^{ks} (2^{-2k}+|2^{-k}(\xi+\eta)|^2)^{\f{s}{2}}\Phi(2^{-k-2} (\xi+\eta))  \\
	&\hspace{180pt} \vp(2^{-k} \xi)\wh{f}(\xi) \vp(2^{-k} \eta) \wh{g}(\eta) e^{2\pi i\lan{\xi+\eta, x}} \, d\xi\, d\eta\\
	&= \sum_{k\geq 0} \int_{\rn} \Phi^k_s(2^{-k} (\xi+\eta)) \vp(2^{-k} \xi)\wh{f}(\xi)  \\
	&\hspace{100pt} (2^{-2k} + |2^{-k}\eta|^2)^{-\f{s}{2}}\vp(2^{-k} \eta) (1+ |\eta|^2)^{\f{s}{2}} \wh{g}(\eta) e^{2\pi i\lan{\xi+\eta, x}} \, d\xi\, d\eta , 
\end{align*}	
where $\Phi^k_s(\cdot) := (2^{-2k} + |\cdot|^2)^{s/2} \Phi(2^{-2} \cdot)$.  Since $\vp$ is supported on an annulus,   pick $\wt{\Psi} \in \cs(\rn)$   equal to one on the support of $\vp$ and  supported { in the slightly larger annulus $\f 12- \f1{10}\le |\xi|\le 2+\f 15$}.  Writing $\vp= \wtv \vp$, we obtain that 
\begin{align*}
&\Pi_3^1[f,g] (x)  \\
	&= \sum_{k\geq 0} \int_{\rn} \Phi^k_s(2^{-k} (\xi+\eta)) \vp(2^{-k} \xi)\wh{f}(\xi)   \\
	&\hspace{110pt}(2^{-2k} + |2^{-k}\eta|^2)^{-\f{s}{2}}\wtv(2^{-k}\eta) \vp(2^{-k} \eta)\wh{J^s g}(\eta) e^{2\pi i\lan{\xi+\eta, x}} \, d\xi\, d\eta\\
	&= \sum_{k\geq 0} \int_{\rn} \Phi^k_s(2^{-k} (\xi+\eta)) \vp(2^{-k} \xi)\wh{f}(\xi) \wtv^k_{-s} (2^{-k}\eta) \vp(2^{-k} \eta)  \wh{J^s g}(\eta) e^{2\pi i\lan{\xi+\eta, x}} \, d\xi\, d\eta, 
\end{align*}
where $\wtv^k_{-s}(\cdot) := (2^{-2k}+|\cdot|^2)^{s/2} \wtv(\cdot)$.  
Now we scale back and expand $\Phi^k_s$ and $\wtv^k_{-s}$ in  Fourier series, as in Section~\ref{sec:homKP}, over the cube $[-8,8]^n$. Let 
 $c_{m,k}^s$ and  $\wt{c}^{-s}_{l,k}$ be the Fourier coefficients of the expansion 
 defined as
\begin{align*}
c_{m,k}^s &:= \f{1}{8^n} \int_{[-8,8]^n} (2^{-2k} + |\xi|^2)^{s/2} \Phi(\xi) e^{-\f{2\pi i}{16}\lan{\xi, m}} \, d\xi\\
\wt{c}^{-s}_{l,k} &:= \f{1}{8^n} \int_{[-8,8]^n} (2^{-2k} + |\xi|^2)^{s/2} \wtv (\xi) e^{-\f{2\pi i}{16}\lan{\xi, l}} \, d\xi.
\end{align*}
Then {
\begin{align*}
&\Pi_3^1[f,g] (x)  \\
	&= \sum_{k\geq 0} 2^{2kn} \int_{\rn} \Phi^k_s (\xi+\eta) \vp(\xi)\wh{f}(2^{k}\xi) \wtv^k_{-s}(\eta)\vp(\eta)  \wh{J^s g}(2^{k}\eta) e^{2\pi i2^k\lan{\xi+\eta, x}} \, d\xi\, d\eta\\         
	&= \sum_{k\geq 0} 2^{2kn} \int_{\rn}\left( \sum_{m\in \bz^n} c_{m,k}^s e^{\f{2\pi i}{16}\lan{\xi+\eta, m}}\right) \chi_{[-8,8]^n}(\xi+\eta)\vp(\xi)\wh{f}(2^{k}\xi) \\
	&\hspace{70pt} \left(\sum_{l\in \bz^n} \wt{c}^{-s}_{l,k} e^{\f{2\pi i}{16}\lan{\eta, l}}\right) \chi_{[-8,8]^n}( \eta)
	\vp( \eta)  \wh{J^s g}(2^{k}\eta) e^{2\pi i2^k\lan{\xi+\eta, x}} \, d\xi\, d\eta \\
	&= \sum_{k\geq 0} 2^{2kn} \int_{\rn}\left( \sum_{m\in \bz^n} c_{m,k}^s e^{\f{2\pi i}{16}\lan{\xi+\eta, m}}\right) \vp(\xi)\wh{f}(2^{k}\xi) \\
	&\hspace{70pt} \left(\sum_{l\in \bz^n} \wt{c}^{-s}_{l,k} e^{\f{2\pi i}{16}\lan{\eta, l}}\right)   \vp( \eta)  \wh{J^s g}(2^{k}\eta) e^{2\pi i2^k\lan{\xi+\eta, x}} \, d\xi\, d\eta \, , 
\end{align*}}  
 since the characteristic functions are equal to one on the support of $\Psi(\xi)\Psi(\eta)$. 
Lemma~\ref{le:inhom} implies that $b_m^s := \sup_{k\geq 0} |c_{m,k}^s| = O(|m|^{-n-s})$.  The following lemma shows that $\wt{b}_{m}^{-s} := \sup_{k\geq 0} |\wt{c}_{m,k}^{-s}|$ also has a fast decay.

\begin{lemma}\label{le:decay}
Let $\{f_{\de}\}_{\de\in [0,1]}$ be a family of Schwartz functions such that $\wh{f_{\de}}$ is supported on a compact set $K\in \rn$ for all $\de\in [0,1]$.  If $\sup_{\de \in [0,1]}\n{\De^N \wh{f_{\de}}}{L^{\infty}_{\xi}} < B$, then there is some constant $C(N)$ such that
\[
\sup_{\de\in [0,1]}|f_{\de}| (x) \leq B |K| C(N) |x|^{-2N}.
\]
\end{lemma}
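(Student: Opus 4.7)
The plan is to exploit the standard identity
\[
(-\De_\xi)^N e^{2\pi i \lan{x,\xi}} = (2\pi |x|)^{2N}\, e^{2\pi i \lan{x,\xi}},
\]
which is the classical mechanism for converting smoothness of $\wh{f_\de}$ into spatial decay of $f_\de$. The key preliminary observation is that since $f_\de$ is Schwartz, so is $\wh{f_\de}$; combined with the hypothesis that $\wh{f_\de}$ is supported in the compact set $K$, this places $\wh{f_\de}\in C_c^\infty(\rn)$. In particular $(-\De_\xi)^N \wh{f_\de}$ is itself a compactly supported smooth function whose support lies in $K$.

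Next I would start from Fourier inversion, $f_\de(x)=\int_K \wh{f_\de}(\xi)\,e^{2\pi i \lan{x,\xi}}\,d\xi$, fix $x\neq 0$, and substitute the identity above to write $e^{2\pi i \lan{x,\xi}} = (2\pi|x|)^{-2N}(-\De_\xi)^N e^{2\pi i \lan{x,\xi}}$ inside the integral. Integrating by parts $2N$ times in $\xi$ is justified because $\wh{f_\de}$ is compactly supported smooth, so no boundary contribution appears. The outcome is
\[
f_\de(x) \;=\; \frac{1}{(2\pi|x|)^{2N}} \int_K (-\De_\xi)^N \wh{f_\de}(\xi)\, e^{2\pi i \lan{x,\xi}}\, d\xi,\qquad x\neq 0.
\]

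Taking absolute values, using that the integrand is supported in $K$, and invoking the hypothesis $\sup_{\de\in [0,1]}\n{\De^N \wh{f_\de}}{L^\infty_\xi}<B$ immediately yields
\[
\sup_{\de\in[0,1]} |f_\de(x)| \;\leq\; \frac{1}{(2\pi|x|)^{2N}}\cdot |K|\cdot B \;=\; B\,|K|\,C(N)\,|x|^{-2N}
\]
with $C(N)=(2\pi)^{-2N}$, which is the claimed bound.

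I do not anticipate any real obstacle here. The only delicate point is the legitimacy of integration by parts without boundary terms, and this is handled essentially for free by the elementary fact that a Schwartz function with compactly supported Fourier transform lies in $C_c^\infty$. Everything else is a one-line application of the standard Fourier decay-versus-smoothness principle, applied uniformly in the parameter $\de$.
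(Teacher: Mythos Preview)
Your proof is correct and is essentially identical to the paper's: both invoke the identity $\Delta_\xi^N e^{2\pi i\lan{x,\xi}} = C_N |x|^{2N} e^{2\pi i\lan{x,\xi}}$, integrate by parts (the paper phrases this as Green's theorem) using that $\wh{f_\de}\in C_c^\infty(K)$, and then bound the resulting integral trivially by $|K|\cdot B$. One small wording slip in your final paragraph: it is $\wh{f_\de}$, not $f_\de$, that lies in $C_c^\infty$; your main argument has this right.
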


\textbf{Remark:} Let $\cf^{-1}$ denote the inverse Fourier transform, i.e., the Fourier transform composed with the reflection $x\mapsto -x$. Consider the family  $\{ f_{\de}\}_{\de\in [0,1]}$ defined by $f_{\de}:= (\de - \De)^{-\f{s}{2}} \cf^{-1}[\wtv]$.   Note that $\wh{f_{\de}}(\xi) = (\de+|\xi|^2)^{-\f{s}{2}} \wtv(\xi)$ is smooth for $(\de,\xi) \in [0,1]\times \rn$ and compactly supported in $\xi$.  Thus for any $\al \in \bz^n$, $\p^{\al} \wh{f_{\de}}$ is continuous and compactly supported, thus satisfying the condition of the Lemma above. Additionally, $f_{\de}$ is uniformly bounded for $(\de,x) \in [0,1]\times \rn$ as seen by the following.
\[
\n{(\de -\De)^{-\f{s}{2}}\cf^{-1}[\wtv]}{L^{\infty}_x} \leq \n{ (\de+ |\cdot|^2)^{-\f{s}{2}}\wtv}{L^1_{\xi}} \leq \n{|\cdot|^{-\f{s}{2}} \wtv}{L^1_{\xi}}
\]   
Thus, we obtain that $\wt{b}_j^{-s} \leq \sup_{\de \in [0,1]} |f_{\de}|(j) =  O((1+|j|)^{-N})$ for any $N\in \bn$.
\begin{proof}
Using the identity $\De^N e^{i\lan{\xi, x}} = C_N |x|^{2N}e^{i\lan{\xi ,x}}$, we apply   Green's theorem:
\begin{align*}
|x|^{2N} |f_{\de}| (x) &= \left|\int_{\rn} \wh{f_{\de}} (\xi) |x|^{2N}e^{2\pi i\lan{\xi, x}}\, d\xi\right|= C_N \left|\int_{\rn} \wh{f_{\de}} (\xi) \De^{N}e^{2\pi i\lan{\xi, x}}\, d\xi\right|\\
	&= C_N\left|\int_{K} \De^{N}\wh{f_{\de}} (\xi) e^{2\pi i\lan{\xi, x}}\, d\xi\right| \leq C_N |K| \sup_{\de\in [0,1]} \n{\De^N \wh{f_{\de}}}{L^{\infty}_{\xi}}.
\end{align*}
This proves Lemma \ref{le:decay}.
\end{proof}

{ We now continue the proof of Theorem~\ref{th:KP1} for the inhomogeneous case. We have } 
\begin{align*}
& \abs{\Pi_3^1[f,g]}(x) \\
	& = \left| \sum_{m,l\in \bz^n} \sum_{k\geq 0} c_{m,k}^s \wt{c}^{-s}_{l,k}   \right. \\
&\hspace{25pt}\left.\int_{\rn} e^{2\pi i \lan{2^{-k}\xi, \f{m}{16}}} \vp(2^{-k}\xi)\wh{f}(\xi)  e^{2\pi i\lan{2^{-k}\eta, 
\f{m+l}{16}}}\vp(2^{-k}\eta)  \wh{J^s g}(\eta) e^{2\pi i\lan{\xi+\eta,x}} \, d\xi\, d\eta\right| \\
	& \leq \sum_{m,l \in \bz^n} b_{k}^s \wt{b}_{k}^{-s}   \\
	 &\hspace{25pt}\sum_{k\geq 0} \left|\int_{\rn} e^{2\pi i\lan{2^{-k}\xi, \f{m}{16}}} \vp(2^{-k}\xi)\wh{f}(\xi) e^{2\pi i \lan{2^{-k}\eta,\f{m+l}{16}}}\vp( 2^{-k}\eta)  \wh{J^s g}(\eta) e^{2\pi i\lan{\xi+\eta, x}} \, d\xi\, d\eta\right|\\
	& = \sum_{m,l \in \bz^n} b_{k}^s \wt{b}_{k}^{-s} \sum_{k \geq 0}\left|[\De^m_k f](x) [\De^{m+l}_k J^s g](x) \right|, 
\end{align*} 
where $[\De_k^m f](\cdot) := \int_{\rn} 2^{kn} \vp(2^k (\cdot -y)+\f{m}{16}) f(y) \, dy$.

Letting $r_* := \min(r,1)$,
\begin{align*}
\n{\Pi_3^1[f,g]}{L^r}^{r_*}	&\leq \sum_{m,l\in \bz^n} |b_m^s \wt{b}_l^{-s}|^{r_*} \n{\sqrt{\sum_{k\geq 0} |\De^m_k f|^2(x)}\,  \sqrt{\sum_{k\geq 0} |\De^{m+l}_k J^s g|^2(x)} }{L^r}^{r_*}\\
	&= \sum_{m,l\in \bz^n}  |b_m^s \wt{b}_{ { l}}^{-s}|^{r_*} \n{ \sqrt{\sum_{k\geq 0} |\De^m_k f|^2}}{L^p(\rn)}^{r_*} \n{\sqrt{\sum_{k\geq 0} |\De^{m+l}_k J^s g|^2}}{L^q(\rn)}^{r_*}
\end{align*}
for any $\f{1}{p}+ \f{1}{q}= \f{1}{r}$.  Applying Corollary~\ref{cor:m} to the right hand side above gives the estimate for the sum $k\geq 0$, { since,  as observed, $b_m^s = O(|m|^{-n-s})$  and the series in $m$ and $l$ converge when $s>n/r-n$. }

 For $k<0$ define   the Fourier coefficient 
\[ 
a_m^s := { \f{1}{8^n}} \int_{[-8,8]^n} (1+|\xi|^2)^{s/2}\Phi(2\xi)e^{-\f{2\pi i}{16}\lan{\xi, m}}\, d\xi. 
\]
of the function $\Phi(2\xi)$, { which is $O( (1+|m|)^{-N})$ for all $N>0$.} Then we have  
\begin{align*}
&\Pi_3^2[f,g](x) \\
 :& = \sum_{k < 0} \int_{\rn} (1+|\xi+\eta|^2)^{\f{s}{2}} \vp(2^{-k} \xi)\wh{f}(\xi) \vp(2^{-k} \eta) \wh{g}(\eta) e^{2\pi i\lan{\xi+\eta, x}} \, d\xi\, d\eta\\
	&= \sum_{k < 0} \int_{\rn} (1+|\xi+\eta|^2)^{\f{s}{2}} \Phi(2(\xi+\eta)) \vp(2^{-k} \xi)\wh{f}(\xi)  \\
	&\hspace{100pt}(1+|\eta|^2)^{-\f{s}{2}}\Phi(2\eta)\vp(2^{-k} \eta)  \wh{J^s g}(\eta) e^{2\pi i\lan{\xi+\eta, x}} \, d\xi\, d\eta\\
	&= \sum_{k < 0} \int_{\rn} \left(\sum_{m\in \bz^n} a_m^s e^{\f{2\pi i}{16}\lan{\xi+\eta, m}}\right) \chi_{[-8,8]^n} (\xi+\eta) \vp(2^{-k} \xi)\wh{f}(\xi) \\
	&\hspace{50pt} \left(\sum_{l\in \bz^n} a_l^{-s} e^{\f{2\pi i}{16}\lan{\eta, l}} \right)\chi_{[-8,8]^n} (\eta) \vp(2^{-k} \eta) \wh{J^s g}(\eta) e^{2\pi i\lan{\xi+\eta, x}} \, d\xi\, d\eta \\
	&= \sum_{k < 0} \int_{\rn} \left(\sum_{m\in \bz^n} a_m^s e^{\f{2\pi i}{16}\lan{\xi+\eta, m}}\right) \vp(2^{-k} \xi)\wh{f}(\xi) \\
	&\hspace{100pt} \left(\sum_{l\in \bz^n} a_l^{-s} e^{\f{2\pi i}{16}\lan{\eta, l}} \right)\vp(2^{-k} \eta) \wh{J^s g}(\eta) e^{2\pi i\lan{\xi+\eta, x}} \, d\xi\, d\eta	 \, , 
\end{align*}
{ since the characteristic functions are equal to $1$ on the support of 
$\vp(2^{-k} \xi)\vp(2^{-k} \eta)$, since $k<0$. }
Note that $c_m^s$ and $c_m^{-s}$ are $O(|m|^{-N})$ for any $N\in \bn$.  We conclude that 
\begin{align*}
&\Pi_3^2[f,g](x) \\
	&= \sum_{m,l\in \bz^n} a_m^s a_l^{-s}   \\
	&\hspace{26pt}\sum_{k < 0} \int_{\rn}  \vp(2^{-k} \xi)e^{2\pi i \lan{\xi,\f{m}{16}}}\wh{f}(\xi) \vp(2^{-k} \eta) e^{2\pi i \lan{\eta, \f{m+l}{16}}} \wh{J^s g}(\eta) e^{2\pi i\lan{\xi+\eta, x}} \, d\xi\, d\eta\\
	&= \sum_{m,l\in \bz^n} a_m^s a_l^{-s} \sum_{k < 0} \int_{\rn}  \vp(2^{-k} \xi)\wh{\tau_{\f{m}{16}} f}(\xi) \vp(2^{-k} \eta) \wh{\tau_{\f{m+l}{16}} J^s g}(\eta) e^{2\pi i\lan{\xi+\eta, x}} \, d\xi\, d\eta\\
	&= \sum_{m,l\in \bz^n} a_m^s a_l^{-s} \sum_{k<0} [\De_k \tau_{\f{m}{16}} f](x) [\De_k \tau_{\f{m+l}{16}}J^s g] (x)\, ,
\end{align*}
where $\tau_m$ is the translation operator $[\tau_m f](x) = f(x-m)$.  Taking the $L^r$ norm, we obtain 
\begin{align*}
\n{\Pi_3^2 [f,g]}{L^r}^{r_*} &\leq \sum_{m,l \in \bz^n} |a_m^sa_l^{-s}|^{r_*} \n{ \sqrt{ \sum_{k\in \bz} |\De_k \tau_m f|^2}}{L^p}^{r_*} \n{\sqrt{ \sum_{k\in \bz} |\De_k \tau_{m+l}J^s g|^2}}{L^q}^{r_*}\\
&\leq \sum_{m,l \in \bz^n} |a_m^s a_l^{-s}|^{r_*} \n{ f}{L^p}^{r_*} \n{ J^s g }{L^q}^{r_*}.
\end{align*}
In view of the rapid decay
 of $a_m^s$ and $a_m^{-s}$, we conclude  the proof of Case 1.

\textbf{Case 2:} $1<r<\infty$, $(p, q) \in \{(r,\infty), (\infty,r)\}$

We again adapt the proof given in \cite{BB}.    Following the computations in Section~\ref{sec:homKP}, for $j\geq 2$, $[\De_j \Pi_3^1[f,g]](x)$ can be written as	
\begin{align*}
&\sum_{k\geq j-2} \int_{\br^{2n}}2^{js} \Psi_{j,s} (2^{-j} (\xi+\eta))  \Psi(2^{-k} \xi) \wh{f}(\xi)  2^{-ks} \Psi_{k,-s}(2^{-k} \eta) \wh{J^s g}(\eta)\, e^{2\pi i\lan{\xi+\eta, x}}\, d\xi\, d\eta\\
	&\leq 2^{js} \left(\sum_{k\geq j-2} 2^{-2ks}\right)^{\f{1}{2}} \left( \sum_{k\geq j-2} \abs{\De_{j,s} \Big[ [\De_k f] [\De_{k,-s} J^s g]\Big](x)}^2\right)^{\f{1}{2}}
\end{align*}
where $\Psi_{j,s}(\cdot) := (2^{-2j} +|\cdot|^2)^{s/2} \Psi(\cdot)$; and the operators $\cf[\De_{j,s} f]:= \Psi_{j,s}(2^{-j} \cdot) \wh{f}(\cdot)$.  Note that the family $\{\De_{j,s}\}_{j\geq 0}$ is not a Littlewood-Paley { family} in the usual sense, i.e. it is not given by convolution with $L^1$~dilations of a single kernel.  Rather, it is given by convolution with   kernels that  are different for each $j\geq 0$.  Below, we will show that $\{\De_{j,s}\}_{j\geq 0}: L^p \to L^p\ell^2$ for $1<p<\infty$ and $s\in \br$.

When $j<2$, $\big| [\De_j \Pi_3^1[f,g]] \big| (x)$ can be written as
\begin{align*}
	& \left| \sum_{k\geq 0} \int_{\br^{2n}}  \Phi_s (\xi+\eta) \Psi (2^{-j} (\xi+\eta))  \Psi(2^{-k} \xi) \wh{f}(\xi)  2^{-ks} \Psi_{k,-s} (2^{-k} \eta) \wh{J^s g}(\eta)\, e^{2\pi i\lan{\xi+\eta, x}}\, d\xi\, d\eta \right| \\
	&\leq \left(\sum_{k\geq 0} 2^{-2ks}\right)^{\f{1}{2}} \left( \sum_{k\geq 0} \abs{S_2^s \De_j \left[ [\De_k f] [\De_{k,-s} J^s g]\right](x)}^2\right)^{\f{1}{2}}, 
\end{align*}
where $\Phi_{s}(\cdot) := (1 +|\cdot|^2)^{s/2} \Phi(2^2 \cdot)$ and $ S_2^{s}  f = \cf^{-1}[ \Phi_s \wh{f} \,]$. Then,

\begin{align*}
\n{\Pi_3^1[f,g]}{L^r} &\leq C(r,n,s)\left(\n{ \left(\sum_{j\geq 2} \sum_{k\geq j-2} \abs{\De_{j,s} [ \De_k f\,\De_{k,-s} J^s g]}^2\right)^{\f{1}{2}}}{L^r}\right.\\
 &\left.  \qq\qq +\n{ \left(\sum_{j<2} \sum_{k\geq 0} \left|S^s_2 \De_j [ \De_k f  \, \De_{k,-s} J^s g] \right|^2\right)^{\f{1}{2}}}{L^r}\right).
\end{align*}
The operator $\{S^s_2 \De_j\}_{j\in \bz} = \{\De_j S^s_2\}_{j\in \bz} : L^r \to L^r \ell^2$ is clearly bounded for $1<r<\infty$.   Next, we will show that $\{\De_{j,s}\}_{j\geq 0}: L^r \to L^r \ell^2$.   Recall that we have introduced above $\wtv\in \cs(\rn)$ supported on an slightly larger annulus than that of $\Psi$ such that  $\Psi = \wt{\Psi} \Psi$.  We write
\[
[\wt{\De_j} u](x) = \int_{\rn}\wt{\Psi_j^s}(2^{-j}\xi) \Psi (2^{-j} \xi) \wh{u}(\xi)e^{2\pi i\lan{\xi, x}}\, d\xi
\]
where $\wt{\Psi_j^s}(\cdot) :=  (2^{-2j} + |2^{-j} \cdot |^2)^{\f{s}{2}}\wtv (\cdot)$.
Expanding $\wt{\Psi_j^s}$ in Fourier series with coefficients denoted $\wt{c_{m,j}^s}$, we can write
$\wt{\De_j} u= \sum_{m\in \bz^n} \wt{c_{m,j}^s} \De_j^{m} u$ where
\[
[\De_j^m u](x) = \int_{\rn} e^{\f{2\pi i}{16}\lan{2^{-j} \xi, m}}\vp (2^{-j} \xi) \wh{u}(\xi) e^{2\pi i\lan{\xi,x}}\, d\xi.
\]  
Defining $\wt{b_m^s} := \sup_{j\geq 0} |\wt{c_{m,j}^s}|$, we recall from Lemma~\ref{le:decay} that $\wt{b_m^s} = O((1+|m|)^{-N})$ for any $N\in \bn$.  Thus, applying Corollary~\ref{cor:m},
\begin{align*}
\n{\left(\sum_{j\geq 0} \abs{\wt{\De_j} u}^2\right)^{\f{1}{2}}}{L^r} &\leq \n{\left(\sum_{j\geq 0} \abs{ \sum_{m\in \bz^n} \wt{b}_{m}^s |\De_j^{m} u|}^2\right)^{\f{1}{2}}}{L^r} \leq \sum_{m\in \bz^n} \wt{b}_m^s \n{\left(\sum_{j\geq 0} \abs{\De_j^m u}^2\right)^{\f{1}{2}}}{L^r} \\
&\leq C(n,r) \sum_{m\in \bz^n} \wt{b}_m^s \ln (1+ |m|) \n{u}{L^r}.
\end{align*}
Using \cite[Proposition 4.6.4]{G1}, we extend the operators $\{S^s_2 \De_j\}_{j\in \bz}$ and $\{\De_{j,s}\}_{j\geq 0}$ from $L^r \to L^r \ell^2$  to $L^r \ell^2 \to L^r \ell^2 \ell^2$ for $1<r<\infty$ and obtain
\begin{align*}
\n{\Pi_3^1[f,g]}{L^r} &\leq C(n,r,s) \n{ \left(\sum_{k\geq 0} \abs{ \De_k f\,\De_{k,-s} J^s g}^2\right)^{\f{1}{2}}}{L^r}\\
	&\leq C(n,r,s) \sup_{k\geq 0} \n{\De_{k,-s} J^s g}{L^\infty} \n{f}{L^r}\\
	&\leq \n{f}{L^r}\n{J^s g}{L^{\infty}}\sup_{k\geq 0} \Big\|{\wh{\wt{\vp_{k}^{-s}}}}\Big\|_{L^1} 
\end{align*}
for $1<r<\infty$.  Applying Lemma~\ref{le:decay} and the remark following, we obtain  that 
$$
\sup_{k\geq 0} \Big| \wh{\wt{\vp_k^{-s}}}  \Big|(x) = \sup_{\geq 0} \left|\cf \left[(2^{-2k} - |\cdot|^2)^{-s/2} \wt{\vp}(\cdot)\right] \right|(x) = O((1+|x|)^{-N})
$$
 for any $N\in \bn$.  Taking $N>n$ gives the necessary estimate for $\Pi_3^1[f,g]$.

It remains to obtain the endpoint estimates for $\Pi_3^2[f,g]$. For this, we write
\[
\Pi_3^2[f,g] = S^s_2 \left[ \sum_{k\leq 0} (\De_k f )\,  (\De_k S^{-s}_2 J^s g)\right].
\]
Note that, for any $s\in \br$, $S^s_2$ is a $L^p$~multiplier for $1\leq p\leq \infty$ since it is a convolution with an $L^1(\rn)$ function.  Also, the symbol $\sum_{k<0} \vp(2^{-k} \xi) \vp (2^{-k} \eta)$ satisfies the Coifman-Meyer condition in Theorem~\ref{th:CM}.  Thus we obtain
\[
\n{\Pi_3^2[f,g]}{L^r} \leq C(n,r,s) \n{f}{L^p} \n{S^{-s}_2 J^s g}{L^q} \leq C(n,r,s) \n{f}{L^p} \n{J^s g}{L^q}
\]
for any $1\leq r<\infty$, $1\leq p,q\leq \infty$ with $\f{1}{p} + \f{1}{q} =\f{1}{r}$.
\end{proof}

\section{Multi-parameter Kato-Ponce inequality}\label{sec:multiKP}
Let $f,g\in \cs(\rn)$, we want to prove the multi-parameter Kato-Ponce inequality.  Write $\rn = \br^{n_1} \times \br^{n_2} \times \dots \times \br^{n_d}$ and denote for $x\in \rn$, $x= (x_1,x_2,\dots, x_d)$ where $x_j \in \br^{n_j}$ for $j=1,2,\dots ,d$.  

For $s\in \br$, define fractional partial derivatives $|D_{x_j}|^s$ by $\cf [|D_{x_j}|^{s} f] (\xi) := |\xi_j|^{s} \wh{f}(\xi)$.  Let  $E:= \{1,2,\dots, d\}$ and $\mathcal{P}[E]$ be its power set.  For $B \in \mathcal{P}[E]$, denote $D_{x(B)}^{s(B)} := \prod_{j\in B} D_{x_j}^{s_j}$.  Then the multi-parameter homegeneous Kato-Ponce inequality can be stated as follows.  

\begin{theorem}\label{th:multiKP1}
Given $s_j > \max(n_j/r - n_j,0)$ for $j= 1,2,\dots, d$  there exists $C = C( d, n_j ,r, s_j, p(B),q(B))$ so that for all $f,g\in \cs(\rn)$,
\begin{equation}\label{eq:multiKP}
\n{D_{x(E)}^{s(E)} [fg]}{L^r(\rn)} \leq C \sum_{B\in \mathcal{P}[E]} \n{D_{x(B)}^{s(B)} f}{L^{p(B)}(\rn)} \n{D_{x(E \setminus B)}^{s(E \setminus B)} g}{L^{q(B)}(\rn)}
\end{equation}
for $\f{1}{2}<r<\infty$, $1<p(B),q(B)\leq \infty$ satisfying $\f{1}{p(B)} + \f{1}{q(B)} = \f{1}{r}$.
\end{theorem}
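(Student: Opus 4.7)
The plan is to run the one-parameter arguments of Theorem~\ref{th:KP1} in a tensor-product fashion across the $d$ coordinate blocks. For each $j\in E$ fix a radial Littlewood-Paley decomposition $\{\vp_j(2^{-k}\cdot)\}_{k\in\bz}$ on $\br^{n_j}$, together with $\Phi_j$ as in Section~\ref{sec:homKP}, and decompose the product $\wh{f}\cdot\wh{g}$ in the $j$-th block into one of three regimes depending on whether $\xi_j$ is much smaller than $\eta_j$ (type~I$_j$), much larger (type~II$_j$), or comparable (type~III$_j$). Taking the product over $j=1,\dots,d$ produces $3^d$ pieces, each of which we will match with exactly one of the $2^d$ subsets $B\in\mathcal{P}[E]$ appearing on the right-hand side of \eqref{eq:multiKP}: the derivative $D_{x_j}^{s_j}$ is moved onto $g$ in type~I$_j$ directions, onto $f$ in type~II$_j$ directions, and placed on either factor in type~III$_j$ directions, according to a choice that makes the resulting multiplier manageable.

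In directions of type~I$_j$ or~II$_j$ (the ``off-diagonal'' directions), after absorbing $D_{x_j}^{s_j}$ into the appropriate factor, the remaining symbol is a classical Coifman-Meyer symbol separately in the pair $(\xi_j,\eta_j)$, and these can be handled via the multi-parameter bilinear Coifman-Meyer theorem of Muscalu-Pipher-Tao-Thiele~\cite{Acta}. In directions of type~III$_j$ (the ``diagonal'' directions) we proceed exactly as in Section~\ref{sec:homKP}: on the support of $\vp_j(2^{-k_j}\xi_j)\vp_j(2^{-k_j}\eta_j)$ we insert the cutoff $\Phi_j(2^{-k_j-2}(\xi_j+\eta_j))$ and expand $|\xi_j+\eta_j|^{s_j}\Phi_j(2^{-2}\cdot)$ in a Fourier series on $[-8,8]^{n_j}$ with coefficients $c^{s_j}_{m_j}$. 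Lemma~\ref{le:homKP} applied in $\br^{n_j}$ yields the decay $|c^{s_j}_{m_j}|=O((1+|m_j|)^{-n_j-s_j})$, and taking the product over all type~III$_j$ directions produces modulated Littlewood-Paley operators $\De^{m_j}_{k_j}$ indexed by vectors $(m_j)$ whose frequency projections in different coordinate blocks commute, allowing iteration of Corollary~\ref{cor:m} block by block.

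The main obstacle is controlling the mixed pieces: in each one we must simultaneously apply the multi-parameter Coifman-Meyer theorem along the off-diagonal directions and an iterated multi-parameter Littlewood-Paley $\ell^2$ estimate along the diagonal ones, which requires a vector-valued extension of the multi-parameter multiplier theorem. Once this is in place, the argument concludes by H\"older's inequality, repeated application of Corollary~\ref{cor:m} in each diagonal block, and summation in the Fourier indices $(m_j)$; the series $\sum_{m_j}|c^{s_j}_{m_j}|^{r_*}[\ln(2+|m_j|)]^{Cr_*}$ converges precisely when $s_j>n_j/r-n_j$, matching the stated hypothesis. The inhomogeneous analogue, the endpoint cases where some $p(B)$ or $q(B)$ equals $1$, and the weak-type versions when $r<1$ follow by the same modifications as in Sections~\ref{sec:homKP} and~\ref{sec:inhKP}, using Lemmas~\ref{le:inhom} and~\ref{le:decay} in each block to supply the requisite uniform Fourier-coefficient decay.
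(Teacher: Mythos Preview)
Your decomposition into $3^d$ pieces and the Fourier-series treatment of the diagonal directions are correct and match the paper. However, there is a genuine gap in how you handle the off-diagonal directions and, more importantly, in how you glue the two treatments together.

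You propose to handle the off-diagonal (type~I$_j$, type~II$_j$) directions via the multi-parameter Coifman-Meyer theorem of \cite{Acta}, and you correctly identify the difficulty: for a mixed piece you would need a vector-valued (in $\ell^2$ over the diagonal scales $k_j$, $j\in A_3$) extension of that theorem. You then write ``once this is in place'' and move on. That extension is not standard and is not supplied here; as written this is a missing ingredient, not a routine step. The paper sidesteps this obstacle entirely and does \emph{not} invoke Coifman-Meyer in the multi-parameter argument. Instead it applies Lemma~\ref{le:Hp} (the multi-parameter square-function lower bound, valid for all $0<p<\infty$) in the off-diagonal directions $A_{1,2}$. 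This converts those directions into an $\ell^2$ sum over scales $m(A_{1,2})$ as well, after which \emph{all} directions are treated by Fourier-series expansion of the localized symbols. One then applies Cauchy-Schwarz in the diagonal scales, dominates the $S^{(A_\alpha)}_{m(A_\alpha)}$ factors by Hardy-Littlewood maximal functions, invokes Fefferman-Stein \cite{FS} to remove them, and finishes with Lemma~\ref{le:square}. The point is that Lemmas~\ref{le:Hp} and~\ref{le:square} together with Fefferman-Stein replace the need for any multi-parameter bilinear multiplier theorem or a vector-valued version thereof.

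A secondary issue: your last sentence asserts that weak-type versions when $r<1$ follow ``by the same modifications''. In the multi-parameter setting this is false in general, as the paper explicitly notes: $L^{r,\infty}$ norms cannot be iterated across the coordinate blocks, so the weak endpoints are not claimed in Theorem~\ref{th:multiKP1}. Likewise the statement only concerns $1<p(B),q(B)\le\infty$, so there is no $p(B)=1$ case to address.
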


\begin{theorem}\label{th:multiKP2}
When $s_j \leq \max(n_j/r - n_j,0)$ for some $j= 1,2,\dots, d$, \eqref{eq:multiKP} fails.
\end{theorem}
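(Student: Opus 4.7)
The plan is to reduce the multi-parameter negative result to the one-parameter Theorem~\ref{th:KP2} by a tensor-product construction along a direction $j$ in which $s_j$ violates the sharpness threshold. First I would fix $j \in \{1,\ldots,d\}$ with $s_j \le \max(n_j/r - n_j, 0)$ (and, implicitly as in Theorem~\ref{th:KP2}, $s_j \notin 2\bn \cup \{0\}$), and let $f_0, g_0 \in \cs(\br^{n_j})$ be a one-parameter counter-example furnished by the proof of Theorem~\ref{th:KP2}: when $0 < s_j \le n_j/r - n_j$ (forcing $r<1$), I would take $f_0 = g_0$ to be any nontrivial nonnegative Schwartz function, so that Lemma~\ref{le:homKP} forces $\n{D^{s_j}(f_0 g_0)}{L^r(\br^{n_j})} = \infty$ while every norm $\n{D^{s_j} f_0}{L^p(\br^{n_j})}$ remains finite; when $n_j/r - n_j < s_j < 0$, I would instead use the modulation pair $f_0(y) = e^{2\pi i 2^k \lan{e_1, y}} \wh\Phi(y)$, $g_0(y) = e^{-2\pi i 2^k \lan{e_1, y}} \wh\Phi(y)$ with $|k|\gg 1$ from Section~\ref{sec:homKP}. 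For every $k \neq j$ I would fix nontrivial Schwartz functions $\phi_k, \psi_k \in \cs(\br^{n_k})$ with $\phi_k\psi_k \not\equiv 0$ and whose Fourier transforms vanish to sufficiently high order at the origin, so that all the $L^p(\br^{n_k})$ norms of $\phi_k$, $\psi_k$, $\phi_k\psi_k$ and their $D^{s_k}$-images are finite.

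Next I would set
\[
f(x) = f_0(x_j)\prod_{k\neq j}\phi_k(x_k), \qquad g(x) = g_0(x_j) \prod_{k\neq j} \psi_k(x_k),
\]
so that $f$, $g$, and the pointwise product $fg$ are each tensor products in the splitting $\rn = \prod_k \br^{n_k}$. Since each $D^{s_k}_{x_k}$ acts only in the $k$-th variable, both sides of \eqref{eq:multiKP} factor coordinate-wise:
\[
\n{D_{x(E)}^{s(E)}(fg)}{L^r(\rn)} = \n{D^{s_j}(f_0 g_0)}{L^r(\br^{n_j})} \prod_{k \neq j} \n{D^{s_k}(\phi_k \psi_k)}{L^r(\br^{n_k})},
\]
and each of the $2^d$ summands on the right side of \eqref{eq:multiKP} splits into an $x_j$-contribution of $\n{D^{s_j} f_0}{L^{p(B)}} \n{g_0}{L^{q(B)}}$ (when $j \in B$) or $\n{f_0}{L^{p(B)}} \n{D^{s_j} g_0}{L^{q(B)}}$ (when $j \notin B$), times finite contributions in the variables $x_k$ for $k \neq j$.

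Combining the two computations yields the contradiction: by construction the $x_j$-factor on the left is $+\infty$, so the left-hand side of \eqref{eq:multiKP} is infinite; every $x_j$-factor on the right is finite, and the auxiliary $x_k$-factors are finite and strictly positive, so the right-hand side is finite. No constant $C$ can then satisfy \eqref{eq:multiKP}, proving the claim.

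I expect the main obstacle to be the bookkeeping in the auxiliary directions: one must arrange $\phi_k, \psi_k$ so that every one of the finitely many norms $\n{\phi_k}{L^{p(B)}}$, $\n{\psi_k}{L^{q(B)}}$, $\n{D^{s_k}\phi_k}{L^{p(B)}}$, $\n{D^{s_k}\psi_k}{L^{q(B)}}$, $\n{D^{s_k}(\phi_k\psi_k)}{L^r}$ is simultaneously finite and strictly positive for all $B \in \mathcal{P}[E]$. When $s_k \ge 0$ this is automatic for any Schwartz choice; when $s_k < 0$ one imposes vanishing of $\wh{\phi_k}$ and $\wh{\psi_k}$ at the origin to sufficient order, after which the $D^{s_k}$-images remain Schwartz and every $L^p$ norm is controlled. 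With these finitely many conditions secured, the rest of the argument is the routine tensor-product computation described above.
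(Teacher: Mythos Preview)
Your proposal is correct and takes essentially the same approach as the paper: both reduce to the one-parameter Theorem~\ref{th:KP2} via a tensor-product choice of $f$ and $g$, so that each side of \eqref{eq:multiKP} factors coordinatewise and the one-parameter failure in the critical direction (Lemma~\ref{le:homKP} when $0<s_j\le n_j/r-n_j$, the modulation example of Section~\ref{sec:homKP} when $s_j<0$) forces the contradiction. The paper simply takes $f=F$, $g=\overline{F}$ with $F(x)=\prod_k f^{(n_k)}(x_k)$ and is less explicit than you are about the auxiliary-direction bookkeeping.
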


\begin{proof}[Proof of Theorem~\ref{th:multiKP2}]
Let $f^{(n_j)}\in \cs(\br^{n_j})$ be non-zero functions for $j=1,\dots, d$.  Then define $F (x):= \prod_{j=1}^d F^{(n_j)} ( x_j)$. Then $\wh{F}(\xi) = \prod_{j=1}^d \wh{f^{(n_j)}}(\xi_j)$ and $[D_{x(E)}^{s(E)} F](x) = \prod_{j=1}^{d} [D^{s_j} f^{(n_j)}](x_j)$.  Thus their $L^p(\rn)$ norms split into a product of $L^p(\br^{n_j})$ norms.  Thus letting $f=F$ and $g=\overline{F}$ in \eqref{eq:multiKP}, Lemma~\ref{le:homKP} gives that the left hand side is infinite if $s \leq \f{n_j}{r} - n_j$ for any $j= 1,\dots, d$, while the right hand side is finite as long as $p(B),q(B)>1$.

The argument for $s_j<0$ easily follows by a similar argument as in Section~\ref{sec:homKP}.
\end{proof}

Next we will prove Theorem~\ref{th:multiKP1}. First we make a few remarks below.  
\begin{itemize}
\item In the case of $\mathbf{R}^2$, Theorem~\ref{th:multiKP1} stated in the appendix of \cite{Acta}.  However, in view of Theorem~\ref{th:multiKP2}, we note that the inequality \cite[Equation (61)]{Acta}  holds only when $\min(\al, \be)  \, >  \,  \f{1}{r} - 1$ for $r<1$.  \emph{This point has been corrected in \cite{MS2}.}

\item The weak $L^r$ endpoints for these estimates could be false due  to the fact that $L^{r,\infty}$~norms cannot be iterated, i.e. $\n{f}{L^{r,\infty}(\br^2)} \neq \n{\n{f}{L^{r,\infty}(\br)}}{L^{r,\infty}(\br)}$.

\item From the proof given in Section~\ref{sec:inhKP} and the proof to be presented below, it will be apparent that the operators $D^s_{x_j}$ can be replaced by $J^s_{x_j}$ defined similarly.  We have not included this generalization in order to simplify the argument.
\end{itemize}
The proof of Theorem~\ref{th:multiKP1} is an iteration of the proof of Theorem~\ref{th:KP1} in Section~\ref{sec:homKP}, using multi-parameter Littlewood-Paley decompositions.  We introduce the corresponding operators here.

Let $\Phi^{(j)}\in \cs(\br^{n_j})$ be such that $\Phi \equiv 1$ when $|\xi|\leq 1$ and is supported on $|\xi|\leq 2$.  Define $\vp^{(j)}(\cdot) := \Phi^{(j)} (\cdot) - \Phi^{(j)}(2\cdot)$.  For technical reasons, we also define $\Psi^{[j]}\in \cs(\br^{n_j})$ to be supported on an annulus, and satisfying $\sum_{k\in \bz} |\Psi^{[j]} (2^{-k} \xi_j)|^2 =1$ for all $\xi_j \in \br^{n_j}\setminus\{0\}$. 

Define the operator $S_k^{(j)}$ by $\cf[S_k^{(j)} f](\xi) = \Phi^{(j)}(2^{-k}\xi_j)\wh{f}(\xi)$; $\De_k^{(j)}$ by $\cf[\De_k^{(j)} f](\xi) = \vp^{(j)}(2^{-k}\xi_j)\wh{f}(\xi)$; and
$\De_k^{[j]}$ by $\cf[\De_k^{[j]} f](\xi) = \Psi^{[j]}(2^{-k} \xi_j)\wh{f}(\xi)$.   Given $B\subset \{1,2,\dots, d\}$, define $S_{k(B)}^{(B)}$, $\De_{k(B)}^{(B)}$, $\De_{k(B)}^{[B]}$ by $\prod_{j\in B} S_{k_j}^{(j)}$, $\prod_{j\in B} \De_{k_j}^{(j)}$, $\prod_{j\in B} \De_{k_j}^{[j]}$ respectively.

The following lemma shows the boundedness of the corresponding square-functions in $L^p(\rn)$ for $1<p<\infty$.  This is a slight generalization of \cite[Theorem 5.1.6]{G1}.
\begin{lemma}\label{le:square}
Let $\wt{\Psi^{(j)}}\in \cs(\br^{n_j})$ satisfy the conditions \eqref{5.1.3-1aa} and \eqref{5.1.3-1a} in Theorem~\ref{th:LP} with the constant $B_j^2$, $B^2$ respectively for $j=1,2,\dots, d$. Let define $\wt{\De_{k}^{(j)}}$ by $\cf\left[\wt{\De_k^{(j)}} u\right](\xi) = \wt{\Psi^{(n_j)}} ( 2^{-k} \xi_j) \wh{u}(\xi)$.  Then for all $u\in \cs(\rn)$, there exists $C_j= C(n_j,p)<\infty$ satisfying
\[
\n{\sqrt{ \sum_{k_1, \dots, k_d \in \bz} \left|\wt{\De^{(1)}_{k_1}}\cdots \wt{\De^{(d)}_{k_d}} u\right|^2}}{L^p(\rn)} \leq \left[\prod_{j=1}^d  C_j B_j \max (p,(p-1)^{-1})\right] \n{u}{L^p(\rn)}.
\]
\end{lemma}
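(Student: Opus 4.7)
The plan is to iterate the one-parameter Littlewood-Paley estimate of Theorem~\ref{th:LP} in each of the $d$ coordinate directions, using a vector-valued extension of the operator norm at each stage to glue the iterations together.

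First I would establish the single-direction version for each fixed $j$. Applying Theorem~\ref{th:LP} on $\br^{n_j}$ to the function $x_j \mapsto u(x_1,\dots,x_d)$ (freezing the remaining variables as parameters) and raising to the $p$-th power and integrating via Fubini over the other coordinates yields
$$
\n{\sqrt{\sum_{k_j \in \bz} \big|\wt{\De_{k_j}^{(j)}} u\big|^2}}{L^p(\rn)} \le C_{n_j} B_j \max(p, (p-1)^{-1}) \n{u}{L^p(\rn)}.
$$
Here I would use that the hypotheses \eqref{5.1.3-1aa} and \eqref{5.1.3-1a} placed on $\wt{\Psi^{(j)}}$ are purely $n_j$-dimensional statements, so Theorem~\ref{th:LP} applies slice-by-slice with constant $B_j$.

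Next I would iterate these bounds by invoking the vector-valued extension from \cite[Proposition 4.6.4]{G1} (already used in Section~\ref{sec:homKP}). For each $j$ the operator $\vec T_j : u \mapsto \{\wt{\De_{k_j}^{(j)}} u\}_{k_j}$ is bounded from $L^p(\rn)$ to $L^p(\rn,\ell^2_{k_j})$ with norm at most $C_{n_j} B_j \max(p,(p-1)^{-1})$; by \cite[Proposition 4.6.4]{G1} it extends, with the same operator norm, to a bounded map from $L^p(\rn,H)$ to $L^p(\rn, H \otimes \ell^2_{k_j})$ for any separable Hilbert space $H$. Starting from $u$ and successively applying the extensions of $\vec T_1, \vec T_2, \dots, \vec T_d$ (at step $j$ using $H = \ell^2_{k_1} \otimes \cdots \otimes \ell^2_{k_{j-1}}$) produces precisely the multi-parameter square function on the left-hand side of the lemma, with constant equal to the product of the one-parameter norms. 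Because the $\wt{\De_{k_i}^{(i)}}$ for distinct $i$ are Fourier multipliers in disjoint subsets of variables they commute, so the iterated composition equals the product operator $\wt{\De_{k_1}^{(1)}} \cdots \wt{\De_{k_d}^{(d)}}$ regardless of the order of application.

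The main obstacle is essentially notational: keeping track of which Hilbert space is valued in at each iteration step so that the constants multiply cleanly. Conceptually the proof is just Fubini plus the one-parameter Littlewood-Paley theorem lifted to a Hilbert-space valued Calder\'on-Zygmund operator, and the proof of \cite[Proposition 4.6.4]{G1} supplies the only analytic ingredient beyond Theorem~\ref{th:LP} that is required.
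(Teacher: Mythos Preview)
Your proposal is correct and matches the paper's approach: the paper states that the lemma is ``simply an iteration of Theorem~\ref{th:LP} $d$~times whilst commuting the $L^p(\br^{n_j})$ norms'' and refers to \cite[Theorem 5.1.6]{G1}, which is precisely the iterated one-parameter Littlewood--Paley argument with the vector-valued extension that you describe. One small caveat: \cite[Proposition 4.6.4]{G1} does not literally preserve the operator norm but introduces a dimensional/$p$-dependent constant at each stage; since the $C_j$ in the lemma are allowed to depend on $n_j$ and $p$, this is harmless and the stated product bound still follows.
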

The proof of the lemma above is simply an iteration of Theorem~\ref{th:LP} $d$~times whilst commuting the $L^p(\br^{n_j})$ norms.  We refer to the proof given in \cite[Theorem 5.1.6]{G1} for this calculations.

The following lemma is due to Ruan, \cite[Theorem 3.2]{HP}.
\begin{lemma}\label{le:Hp}
Let $0<p<\infty$.  For all $u \in \cs(\rn)$, there exists $C= C(n,p)$ satisfying
\[
\n{u}{L^p(\rn)} \leq C \n{\sqrt{\sum_{k_1, \dots, k_d\in \bz} \left|\De^{[1]}_{k_1}\cdots \De^{[d]}_{k_d} u\right|^2}}{L^p(\rn)}.
\]
\end{lemma}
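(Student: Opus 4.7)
The plan is to split on the size of $p$. For $p>1$ I would run a duality argument using a Calderón reproducing formula and the upper square function bound from Lemma~\ref{le:square}. For $0<p\le 1$ duality collapses, and I would defer to the product Hardy space machinery packaged in \cite[Theorem 3.2]{HP}.

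For $p>1$ my starting observation is that the single-parameter identity $\sum_{k_j\in\bz}|\Psi^{[j]}(2^{-k_j}\xi_j)|^2=1$ for $\xi_j\ne 0$ tensorizes to
\[
\sum_{k_1,\dots,k_d\in\bz}\Big|\prod_{j=1}^d \Psi^{[j]}(2^{-k_j}\xi_j)\Big|^2=1
\]
off the union of coordinate hyperplanes, which is a Lebesgue-null subset of $\rn$. Taking each $\Psi^{[j]}$ real-valued so that $\De_k^{[j]}$ is self-adjoint, a Fubini argument on the Fourier side then yields the reproducing formula
\[
u=\sum_{k_1,\dots,k_d\in\bz}\De^{[1]}_{k_1}\cdots\De^{[d]}_{k_d}\,\De^{[1]}_{k_1}\cdots\De^{[d]}_{k_d}u,\qq u\in\cs(\rn),
\]
since the discarded frequency set has measure zero and $\wh{u}$ is Schwartz. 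Pairing against $v\in L^{p'}(\rn)$ with $\n{v}{L^{p'}(\rn)}\le 1$, applying Cauchy-Schwarz in $(k_1,\dots,k_d)$ and Hölder in $x$ gives
\[
|\lan{u,v}|\le \n{S(u)}{L^p(\rn)}\,\n{S(v)}{L^{p'}(\rn)},
\]
where $S(w):=\big(\sum_{k_1,\dots,k_d}|\De^{[1]}_{k_1}\cdots\De^{[d]}_{k_d}w|^2\big)^{1/2}$ is the full product square function. Lemma~\ref{le:square} bounds $\n{S(v)}{L^{p'}}\le C\n{v}{L^{p'}}\le C$, and taking the supremum over admissible $v$ closes this case.

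For $0<p\le 1$ the duality route is unavailable. The substitute is to identify the product Hardy norm $\n{u}{H^p(\rn)}\simeq\n{S(u)}{L^p(\rn)}$ through the grand maximal function and Journé's covering lemma, and then to exploit the embedding $H^p\hookrightarrow L^p$ on Schwartz inputs. Ruan in \cite{HP} handles this by decomposing $u$ into product rectangle atoms, estimating each atom in $L^p$ directly, and summing via the $\ell^p$~quasi-triangle inequality. The main obstacle lies precisely here: a naive iteration of Theorem~\ref{th:LP} breaks down because no vector-valued Fefferman-Stein maximal inequality is available in $L^p$ for $p\le 1$, so genuine covering-lemma input tailored to the product structure is required. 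For the present paper this case is accepted as a black box from \cite[Theorem 3.2]{HP}.
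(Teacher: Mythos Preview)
Your proposal is correct and matches the paper's own treatment essentially verbatim: the paper does not prove this lemma in detail but simply remarks that for $1<p<\infty$ it ``immediately follows from duality and Lemma~\ref{le:square}'' and for $0<p\le 1$ refers to the multi-parameter Hardy space characterization in \cite{HP}. Your write-up just fills in the standard duality computation that the paper leaves implicit.
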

For $1<p<\infty$, this immediately follows from duality and Lemma~\ref{le:square}.  However, for $0<p\leq 1$, this is a consequence of a multi-parameter square-function characterization of Hardy spaces~$H^p(\rn)$.  We refer to \cite{HP} for details.

\begin{proof}[Proof of Theorem~\ref{th:multiKP1}]
We introduce   notation  to aid the computations.  Although we   strive to use clear and accurate notation  throughout, it will be inevitable at times to be flexible for the sake of  exposition. We have

\begin{align*}
\left[D_{x(E)}^{s(E)} [fg]\right]&(x) = \iint_{\br^{2n}} \left[\prod_{j=1}^d |\xi_j+\eta_j|^{s_j}\right] \wh{f}(\xi) \wh{g}(\eta) e^{2\pi i\lan{\xi+\eta, x}}\, d\xi\, d\eta\\
	&\hspace{-20pt}= \iint_{\br^{2n}} \prod_{j=1}^d\left[ |\xi_j+\eta_j|^{s_j} \sum_{k,l\in \bz} \vp^{(j)}(2^{-k} \xi_j) \vp^{(j)}(2^{-l} \eta_j)\right] \wh{f}(\xi)  \wh{g}(\eta) e^{2\pi i\lan{\xi+\eta, x}}\, d\xi\, d\eta . 
\end{align*}
For each $j=1,2,\dots d$, split the sum inside the square bracket   into $\sum_{l<k-1} \cdot + \sum_{k<l-1}\cdot + \sum_{|k-l|\leq 1}\cdot =: M^1_j  + M^2_j + M^3_j$ and   
take the product over $j=1,2,\dots d$, to obtain
\[
\prod_{j=1}^d [ M^1_j + M^2_j + M^3_j ](\xi_j,\eta_j) = \sum_{a_1=1}^3\sum_{a_2=1}^3  \dots\sum_{a_d=1}^3 \left[ \prod_{j=1}^d M^{a_j}_j(\xi_j,\eta_j) \right].
\]
Define $J := [\bz / 3\bz]^d$ to be the set of $n$-tuples where each component is from $\{1,2,3\}$.  Given $A = (a_1, a_2,\dots, a_n) \in J$, define $M_A(\xi,\eta) := \prod_{j=1}^d M_j^{a_j}(\xi_j,\eta_j)$ so that the sum above can be expressed as $\sum_{A \in J} M_A(\xi,\eta)$.  Denoting 
\[
\Pi_A[f,g] (x):= \int_{\br^{2n}} M_A(\xi,\eta) \wh{f}(\xi) \wh{g}(\eta) e^{2\pi i\lan{\xi+\eta, x}}\, d\xi \, d\eta,
\]
we have $D^{s(E)}_{x(E)} [fg]= \sum_{A\in J} \Pi_A [f,g]$.  Thus, given $A\in J$, it suffices to show \eqref{eq:multiKP} for $\Pi_A[f,g]$.

Fix $A = (a_1, a_2, \dots, a_d) \in J$.  We define the sets $A_1, A_2, A_3$ such that for $\al = 1,2,3$, $A_{\al} := \{j\in E: a_j = \al\}$.  For any $A\in J$, $\{A_1, A_2, A_3\}$ forms a partition of $E$. For $\al,\be \in \{1,2,3\}$, $A_{\al,\be} := A_{\al} \cup A_{\be}$.  Roughly speaking, $A_{1,2}$ represents the components which have the high-low frequency interactions, and $A_3$ represents the ones with high-high interactions.  

Iterate the $L^r(\rn)$~norm by $\n{\Pi_A [f,g]}{L^r(\rn)} = \n{ \n{ \Pi_A[f,g]}{L^r_{A_{1,2}}\left(\br^{|A_{1,2}|}\right)}  }{L^r_{A_3}\left(\br^{|A_3|}\right)}$.  For the norm inside, we apply Lemma~\ref{le:Hp} to obtain
\[
\n{\Pi_A[f,g]}{L^r_{A_{1,2}}\left(\br^{|A_{1,2}|}\right)} \leq C(n,r) \n{ \sqrt{\sum_{\tiny m_j\in \bz;\, j\in A_{1,2}} \abs{ \De^{[A_{1,2}]}_{m(A_{1,2})}  \Pi_A [f,g] }^2} }{L^r_{A_{1,2}}\left(\br^{|A_{1,2}|}\right)}.
\]
For $j\in E$ and $\al=1,2$, denote $\wt{M_{j,m}^{\al}}:= \psi^{[j]}(2^{-m}(\xi+\eta)) M^{\al}_j(\xi,\eta)$, and $\wt{M_{j,m}^{3}} := M_j^3$.  Then
\[
\left[\De^{[A_{1,2}]}_{m(A_{1,2})}  \Pi_A [f,g]\right](x) = \int_{\br^{2n}} \left[\prod_{j=1}^d \wt{M^{a_j}_{j,m_j}}(\xi_j,\eta_j) \right] \wh{f}(\xi) \wh{g}(\eta) e^{2\pi i\lan{\xi+\eta, x}}\, d\xi\, d\eta.
\]
We analyze the symbols $\wt{M^{a_j}_{j,m_j}}$ and transfer the fractional derivatives onto the high-frequency term.  
\begin{align*}
\wt{M^1_{j,m}} &= |\xi_j+\eta_j|^{s_j} \vp^{[j]}(2^{-m}(\xi_j + \eta_j))\sum_{k\in \bz} \vp^{(j)}(2^{-k} \xi_j) \Phi^{(j)}(2^{-k+2} \eta_j)\\
	&= |\xi_j+\eta_j|^{s_j} \sum_{|k-m|\leq 3} \vp^{[j]}(2^{-m}(\xi_j + \eta_j))\vp^{(j)}(2^{-k} \xi_j) \Phi^{(j)}(2^{-k+2} \eta_j)\\
	&= \sum_{|k-m|\leq 3}2^{(m-k)s} \vp_{s_j}^{[j]}(2^{-m}(\xi_j + \eta_j))  \vp_{-s_j}^{(j)}(2^{-k} \xi_j)\Phi(2^{-k+2} \eta_j) |\xi_j|^{s_j}, 
\end{align*}
where $\vp_{s_j}^{[j]} (\cdot):= |\cdot|^{s_j} \vp^{[j]} (\cdot)$ and $\vp^{(j)}_{-s_j} (\cdot):= |\cdot|^{-s_j} \vp^{(j)}(\cdot)$.  Note that $\vp^{(j)}_{-s_j} \in \cs(\br^{n_j})$, so that the slight change in the operator $\De_{m}^{(j)}$ is not significant.  Thus we will ignore this difference.  Also, we will replace the finite sum above by a larger constant in the end.  Expanding $\vp_{s_j}^{[j]}$ into its Fourier series, we obtain
\begin{align*}
\wt{M^1_{j,m}}(\xi_j,\eta_j) &= \sum_{l\in \bz^{n_j}} \wt{c_l^{s_j}} e^{\f{2\pi i}{16} \lan{ 2^{-k}(\xi_j +\eta_j),l}} \vp_{-s_j}^{(j)}(2^{-k} \xi_j)\Phi^{(j)}(2^{-k+2} \eta_j) \xi_{[-8,8]^n}(2^{-k}(\xi+\eta)) |\xi_j|^{s_j}\\
	&= \sum_{l\in \bz^{n_j}} \wt{c_l^{s_j}} e^{\f{2\pi i}{16} \lan{ 2^{-k} \xi_j, l }} \vp_{-s_j}^{(j)} (2^{-k} \xi_j)e^{\f{2\pi i}{16}\lan{ 2^{-k}\eta_j, l}} \Phi^{(j)}(2^{-k+2} \eta_j) |\xi_j|^{s_j}
\end{align*}
where $\wt{c_l^{s_j}}:= 8^{-n} \int_{[-8,8]^{n_j}} |\xi_j|^s \vp^{(n_j)} (\xi_j) e^{-\f{2\pi i}{16}\lan{\xi_j, l}}\, d\xi_j$.  Similarly,
\[
\wt{M^2_{j,m}}(\xi_j,\eta_j) = \sum_{l\in \bz^{n_j}} \wt{c_l^{s_j}} e^{\f{2\pi i}{16} \lan{ 2^{-k} \xi_j, l }}\Phi^{(j)}(2^{-k+2} \xi_j) e^{\f{2\pi i}{16}\lan{ 2^{-k}\eta_j, l}}\vp_{-s_j}^{(j)}(2^{-k} \eta_j) |\eta_j|^{s_j}.
\]
For $\wt{M_{j,m}^{3}} = M_j^3$, 
\[
M^3_j(\xi_j,\eta_j) = \sum_{l\in \bz^{n_j}} c_l^{s_j} \sum_{k\in \bz} e^{\f{2\pi i}{16}\lan{ 2^{-k} \xi_j,l} } \vp(2^{-k} \xi_j) e^{\f{2\pi i}{16}\lan{ 2^{-k} \eta_j, l} } \vp^{(j)}_{-s_j} (2^{-k} \eta_j) |\eta_j|^{s_j}
\]
where $c_l^{s_j} := 8^{-n}\int_{[-8,8]^{n_j}} |\xi_j|^{s_j} \Phi^{(j)}(\xi_j) e^{-\f{2\pi i}{16} \lan{\xi, l}}\, d\xi_j$ by Lemma~\ref{le:homKP}.  Recall that $c_l^{s_j}, \wt{c_l^{s_j}} = O((1+|l|)^{-n_j - s_j})$.  As in the previous sections, we can pull the summation in $l_j$ for $j=1,2,\dots,d$ outside the norm.  Also, the shift $e^{\f{2\pi i}{16}\lan{2^{-k}\xi_j, l}}$ acting on the Littlewood-Paley operators creates a logarithmic term via Lemma~\ref{le:square}, which can be controlled due to the fast decay of $|c_l^{s_j}|^{r_*}$ and $|\wt{c_l^{s_j}}|^{r_*}$ where $r_* = \min (r,1)$.  Thus, we   ignore the summation in $l$ and the shift operators $e^{\f{2\pi i}{16}\lan{2^{-k}\cdot, l}}$ acting on $\wh{\De_k^{(j)}}$.

Therefore, we can reduce the key expression as follows:
\[
\De_{m(A_{1,2})}^{[A_{1,2}]} \Pi_A [f,g] \approx \sum_{\tiny \begin{array}{c} k_j\in \bz;\\ j\in A_3\end{array}} \left[ \De_{m(A_{1})}^{(A_{1})} S_{m(A_2)}^{(A_2)} \De_{k(A_3)}^{(A_3)} D^{s(A_1)}_{x(A_1)} f\right] \left[ S_{k(A_1)}^{(A_1)} \De_{m(A_2)}^{(A_2)} \De_{k(A_3)}^{(A_3)} D^{s(A_{1,3})}_{x(A_{1,3})} g\right].
\]
Now applying the Cauchy-Schwarz inequality  for the summation in $k_j: j\in A_3$, and the $\ell^1-\ell^{\infty}$~H\"older inequality for $k_j: j\in A_{1,2}$, we obtain
\begin{align*}
&\n{\Pi_A[f,g]}{L^r(\rn)} \lesssim_{n,r} \n{ \left(\sum_{\tiny \begin{array}{c} m_j\in \bz;\\  j\in A_{1,2}\end{array}} \abs{ \De^{[A_{1,2}]}_{m(A_{1,2})}  \Pi_A [f,g] }^2\right)^{\f{1}{2}} }{L^r(\rn)}\\
	&\lesssim \n{ \left(\sum_{\tiny\begin{array}{c} k_{j}\in \bz\\ j\in A_{1,3} \end{array}}  \abs{M^{A_2} \De^{(A_{1,3})}_{k(A_{1,3})} D_{x(A_1)}^{s(A_1)} f}^2\right)^{\f{1}{2}} }{L^p} \n{ \left(\sum_{\tiny\begin{array}{c} k_{j}\in \bz\\ j\in A_{2,3} \end{array}} \abs{M^{A_1} \De_{k(A_{2,3})}^{(A_{2,3})} D_{x(A_{2,3})}^{s(A_{2,3})} g }^2 \right)^{\f{1}{2}}}{L^q}
\end{align*}
for $\f{1}{p} + \f{1}{q} =\f{1}{r}$ and $1<p,q<\infty$, where $M^{A_2}$ and $M^{A_1}$ represents the appropriate Hardy-Littlewood maximal functions.
We apply Fefferman-Stein's inequality \cite{FS} to remove the maximal functions.  Then the quantity above is controlled by
\[
\n{\left(\sum_{\tiny\begin{array}{c} k_{j}\in \bz\\ j\in A_{1,3} \end{array}}  \abs{\De^{(A_{1,3})}_{k(A_{1,3})} D_{x(A_1)}^{s(A_1)} f}^2\right)^{\f{1}{2}}}{L^p(\rn)} \n{\left(\sum_{\tiny\begin{array}{c} k_{j}\in \bz\\ j\in A_{2,3} \end{array}} \abs{ \De_{k(A_{2,3})}^{(A_{2,3})} D_{A_{2,3}}^{s ( A_{2,3} )  } g }^2 \right)^{\f{1}{2}}}{L^q(\rn)}.
\]
Commuting the $L^r(\br^{n_j})$ norms appropriately so that we can apply Lemma~\ref{le:square}, this quantity is bounded by 
\[
\n{D_{x(A_1)}^{s(A_1)} f}{L^p(\rn)} \n{D_{x(A_{2,3})}^{s(A_{2,3})} g}{L^q(\rn)}.
\]
This concludes the proof in the cases $\f{1}{2}<r<\infty$, $1<p,q<\infty$.

Consider the endpoint case $1<r<\infty$, $p=r$ and $q=\infty$.  We begin by applying Lemma~\ref{le:Hp} and following the computations from Section~\ref{sec:homKP}.
\begin{align*}
&\n{\Pi_A[f,g]}{L^r(\rn)} \lesssim_{n,r} \n{ \sqrt{ \sum_{\tiny m_j \in \bz; j\in E} \abs{ \De_{m(E)}^{[E]} \Pi_A [f,g]}^2}}{L^r(\rn)}\\
	&\lesssim \n{\sqrt{\sum_{\tiny m_j, k_j \in \bz; j\in E} \abs{ \wt{\De_{m(E)}^{[E]}} \left[ [\De_{k(A_{1,3})}^{(A_{1,3})} S_{k(A_2)}^{(A_2)} D_{x(A_1)}^{s(A_1)} f] [S_{k(A_1)}^{(A_1)}\De_{k(A_{2,3})}^{(A_{2,3})}  D_{x(A_{2,3})}^{s(A_{2,3})} g]\right]}^2}}{L^r(\rn)}
\end{align*}
where $\wt{\De_{m(E)}^{[E]}}$ is a bounded operator mapping $L^p \to L^p \ell^2$ due to Lemma~\ref{le:square}, so that we can apply \cite[Proposition 4.6.4]{G1}.  Noting that $\sup_{k \in \bz} S_{k(A_1)}^{(A_1)}\De_{k(A_{2,3})}^{(A_{2,3})}: L^{\infty}(\rn) \to L^{\infty}(\rn)$ is a bounded operator, the endpoint estimates follow as before.

This concludes the proof of Theorem~\ref{th:multiKP1}. 
\end{proof}

\end{document}